\documentclass[a4paper,11pt]{article}
\usepackage{amssymb,amsmath,amsthm,amsfonts}
\usepackage{bookmark}
\usepackage{mathrsfs}
\usepackage{multirow}
\usepackage{graphicx}
\usepackage{authblk}
\usepackage{indentfirst}
\usepackage{multicol}
\usepackage{tabu}
\usepackage{url}
\usepackage{fancyhdr}
\usepackage[numbers]{natbib}
\usepackage[all]{xy}
\usepackage{mathtools}
\usepackage[english]{babel}
\usepackage{colortbl}
\usepackage{caption}
\usepackage{hyperref}\usepackage{subcaption}
\usepackage{tikz}\usepackage{float}

\newtheorem{theorem}{Theorem}[section]
\newtheorem{proposition}[theorem]{Proposition}
\newtheorem{lemma}[theorem]{Lemma}

\newtheorem{example}{Example}[section]
\newtheorem{remark}{Remark}[section]
\newcommand{\im}{{\mathrm{im}\hspace{0.1em}}}

\newcommand{\sgn}{{\mathrm{sgn}\hspace{0.1em}}}
\newcommand{\qdim}{{\mathrm{\widehat{dim}}\hspace{0.1em}}}
\usepackage{xr}
\makeatletter
    \newcommand*{\addFileDependency}[1]{
    \typeout{(#1)}
    \@addtofilelist{#1}
    \IfFileExists{#1}{}{\typeout{No file #1.}}
    }
\makeatother


\captionsetup{font=footnotesize}
\topmargin=-0.45in
\evensidemargin=0in
\oddsidemargin=0in
\textwidth=6.5in
\textheight=9.0in
\headsep=0.25in
\linespread{1.1}
\cfoot{\thepage}
\cfoot{abc}

\newcommand{\leftcross}[1]{
  \begin{tikzpicture}[knot/.style={black}]
    \begin{scope}[xshift=5cm]
      \draw[line width=1pt] (-#1,#1)-- (#1,-#1);
      \draw[white,double=black,double distance=1pt,thick](-#1,-#1)-- (#1,#1);
    \end{scope}
  \end{tikzpicture}
}

\newcommand{\rightcross}[1]{
  \begin{tikzpicture}[knot/.style={black}]
    \begin{scope}[xshift=5cm]
      \draw[line width=1pt] (-#1,-#1)-- (#1,#1);
      \draw[white,double=black,double distance=1pt,thick](-#1,#1)-- (#1,-#1);
    \end{scope}
  \end{tikzpicture}
}

\newcommand{\udarc}[1]{
  \begin{tikzpicture}
    \draw[line width=1pt] (0,0) arc (140:40:#1cm);
    \draw[line width=1pt] (0,#1*1.2) arc (220:320:#1cm);
  \end{tikzpicture}
}

\newcommand{\lrarc}[1]{
  \begin{tikzpicture}
    \draw[line width=1pt] (#1*1.2,0) arc (130:230:#1cm);
    \draw[line width=1pt] (0,0) arc (50:-50:#1cm);
  \end{tikzpicture}
}

\setlength{\parindent}{2em}
\title{Evolutionary Khovanov homology}

\author[1]{Li Shen}
\author[2,1]{Jian Liu}
\author[1,3,4]{Guo-Wei Wei \thanks{Corresponding author: weig@msu.edu}}
\affil[1]{Department of Mathematics, Michigan State University, MI 48824, USA}
\affil[2]{Mathematical Science Research Center, Chongqing University of Technology, Chongqing 400054, China}
\affil[3]{Department of Electrical and Computer Engineering, Michigan State University, MI 48824, USA}
\affil[4]{Department of Biochemistry and Molecular Biology, Michigan State University, MI 48824, USA}


\makeatletter
    \renewcommand*{\@fnsymbol}[1]{\ensuremath{\ifcase#1\or \dagger\or *\or *\or
   \mathsection\or \else\@ctrerr\fi}}
\makeatother
\date{}

\begin{document}
    \maketitle

    \paragraph{Abstract}
    Knot theory is a study of the embedding of closed circles into three-dimensional Euclidean space, motivated the ubiquity of knots in daily life and human civilization. However, the current knot theory focuses on the topology rather than metric. As such, the application of knot theory remains primitive and qualitative.
Motivated by the need of quantitative knot data analysis (KDA), this work implements the metric into knot theory, the evolutionary Khovanov homology (EKH), to facilitate a multiscale KDA of real-world data. It is demonstrated that EKH exhibits non-trivial knot invariants at appropriate scales even if the global topological structure of a knot is simple.
The proposed EKH has a great potential for KDA and knot learning.

    \paragraph{Keywords}
     Knot, link, Khovanov homology, persistent Khovanov topology,  evolution, multiscale.

\footnotetext[1]
{ {\bf 2020 Mathematics Subject Classification.}  	Primary  55N31; Secondary 57K10, 57K18.
}

    \newpage
    \tableofcontents
    \newpage

\section{Introduction}\label{section:introduction}

Knots are interlaced structures formed by tying a piece of rope, string, or other flexible material and are omnipresent, from practical uses in sailing, climbing, and fishing to decorative purposes in crafts and art. Knot structures are pervasive, being studied in various fields, including   physics \cite{atiyah1990geometry},  chemistry \cite{lukin2005knotting}, biology \cite{murasugi1996knot}, bioengineering \cite{endy2005foundations}, and therapeutics \cite{pommier2010dna}.
Knot theory is a branch of mathematics that studies knot invariants and has a long history \cite{adams1994knot,burde2002knots}. The most well-known knot invariant is the Jones polynomial \cite{jones1985polynomial}. Around the year 2000, Khovanov introduced the Khovanov homology, which provides an algebraic topology theory of knots. Notably, the graded Euler characteristic of Khovanov homology coincides with the Jones polynomial \cite{bar2002khovanov,khovanov2000categorification}. Khovanov homology is one of the most significant achievements in knot theory and has stimulated much development in the past two decades. 
However, the current knot theory is entirely topological and lacks  metric space analysis.
As such, the applications of knot theory are mostly qualitative or limited to global information about knot types \cite{goundaroulis2017topological,lim2015molecular}.

Recently, multiscale  Gauss link integral (MGLI) theory has been proposed for quantitative  Knot data analysis (KDA) and knot learning (KL) \cite{shen2023knot}. MGLI outperforms other methods, including topological data analysis (TDA),  on biomolecular datasets, demonstrating the great promise of KDA and KL \cite{shen2023knot}. However, MGLI does not offer a topological analysis at various small scales, although it preserves the global knot topology at a large scale.

Real-world knots often exhibit highly complex geometric structures, while their topological structures may be relatively simple. For example, the helical structure of DNA is extremely intricate, but some helices can be simplified into simpler knot structures through transformations such as Reidemeister moves. This indicates that characterizing biological knots solely by their topological structure is often insufficient. On the other hand, considering the geometric details of knots in biological systems can be overly detailed and may obscure the essential properties of these structures. Therefore, a balance between geometry and topology must be struck to capture both the essential geometric details and  the fundamental topological structures of knots.

Persistent homology facilities TDA and has been  used to capture the topological features of point clouds by continuously analyzing its topological structures at different scales \cite{carlsson2009topology,edelsbrunner2008persistent}. This technique has found widespread applications in various fields such as biology, materials science, and geographic information systems \cite{cang2017topologynet}. The basic idea involves representing a point cloud as a distribution of points in space and observing the evolution of its topological structure by systematically changing the geometric scale. This approach motivates a new  knot theory, namely, multiscale (persistent) knot theory. This approach allows us to embed knot geometric shapes into topological invariants, enabling a more nuanced exploration of their geometric properties and topological characteristics.

In this work, we introduce the concept of Evolutionary Khovanov Homology (EKH) to investigate the geometric and topological properties of knots and links in Euclidean space. Specifically, we explore a filtration of links by considering smoothing transformations of crossings in knots. These links provide a sequence illustrating the evolution of Khovanov homology, thereby revealing finer topological structures and geometric shapes of the links. It is worth noting that while the Khovanov homology of unknotted links is trivial, their evolutionary Khovanov homology may not necessarily be so (see  Example \ref{example:unknot}). Consequently, evolutionary Khovanov homology offers a promising framework for characterizing knots or links in real data, which may possess complex structures yet exhibit simplicity under knot equivalence relations. In addition, we employ evolutionary Khovanov homology to investigate the three-dimensional (3D) structure of a SARS-CoV-2 frameshifting pseudoknot. Even though the corresponding knot diagram of the abstract knot of the  pseudoknot is unknotted, the evolutionary Khovanov homology extracts significant topological and geometric information. EKH provides a new KDA paradigm for real world data and can be used for knot deep learning (KDL).

The structure of the paper is as follows. The next section  knot theory, encompassing important concepts such as knot invariants, Gauss code, the Jones polynomial, and Khovanov homology, is reviewed . In Section \ref{section:evolutionary}, we introduce the concept of evolutionary Khovanov homology and provide computations and representations of examples. Section \ref{section:application} demonstrates the application of evolutionary Khovanov homology in biology. Finally, this paper end s with a conclusion.

\section{Knot theory}

To establish notations, we review some fundamental concepts of knot theory in this section, including Reidemeister moves, knot invariants, Gauss code, Kauffman bracket, Jones polynomial, and Khovanov homology. We aim to present these topics in a self-contained manner. For readers interested in a more detailed study of knot theory, we recommend the references \cite{adams1994knot,burde2002knots}.

\subsection{Knot invariant}

A \emph{knot} is an embedding of the circle $S^{1}$ into three-dimensional Euclidean space $\mathbb{R}^{3}$ or into the 3D sphere $S^{3}$. Sometimes, the knot is required to be piecewise smooth and to have a non-vanishing derivative on each closed interval.

Two embeddings $f,g:N\to M$ of manifolds are called \emph{ambient isotopy} if there is a continuous map $F:M\times [0,1]\to M$ such that if $F_{0}$ is the identity map, each $F_{t}:M\to M$ is a homeomorphism, and $F_{1}\circ f=g$.

Two knots are \emph{equivalent} if there is an ambient isotopy between them. It is one of the pivotal challenges in knot theory to study the equivalence classes of knots. This equivalence allows us to systematically study the properties and characteristics of knots without considering their specific shapes or spatial positions. Based on this, researchers have developed various knot invariants and established the topology of knots.

A knot in $\mathbb{R}^{3}$ (resp. $S^{3}$) can be projected into the Euclidean plane $\mathbb{R}^{2}$ (resp. $S^{2}$). From now on, unless specifically stated otherwise, we will focus on knots in $\mathbb{R}^{3}$. For knots in $S^{3}$, we can provide analogous descriptions.

A projection $p:K\to \mathbb{R}^{2}$ of a knot $K$ is \emph{regular} if it is injective everywhere, except at a finite number of crossing points. These crossing points are the projections of double points of the knot, and should occur only where lines intersect. Moreover, the crossing points contain the information of overcrossings and undercrossings. Such a projection is commonly referred to as a \emph{knot diagram}.

It is worth noting that a knot can have different regular projections. Consequently, for a given knot, we can obtain different knot diagrams. Indeed, the knot diagram is independent of the choice of projection up to equivalence. Before proceeding, let us recall the Reidemeister moves.

The \emph{Reidemeister moves} are the following three operations on a small region of the diagram:
\begin{itemize}
  \item[(R1)] Twist and untwist in either direction;
  \item[(R2)] Move one loop completely over Or under another;
  \item[(R3)] Move a string completely over or under a crossing.
\end{itemize}
\begin{figure}[htb!]
  \centering
  \includegraphics[width=0.8\textwidth]{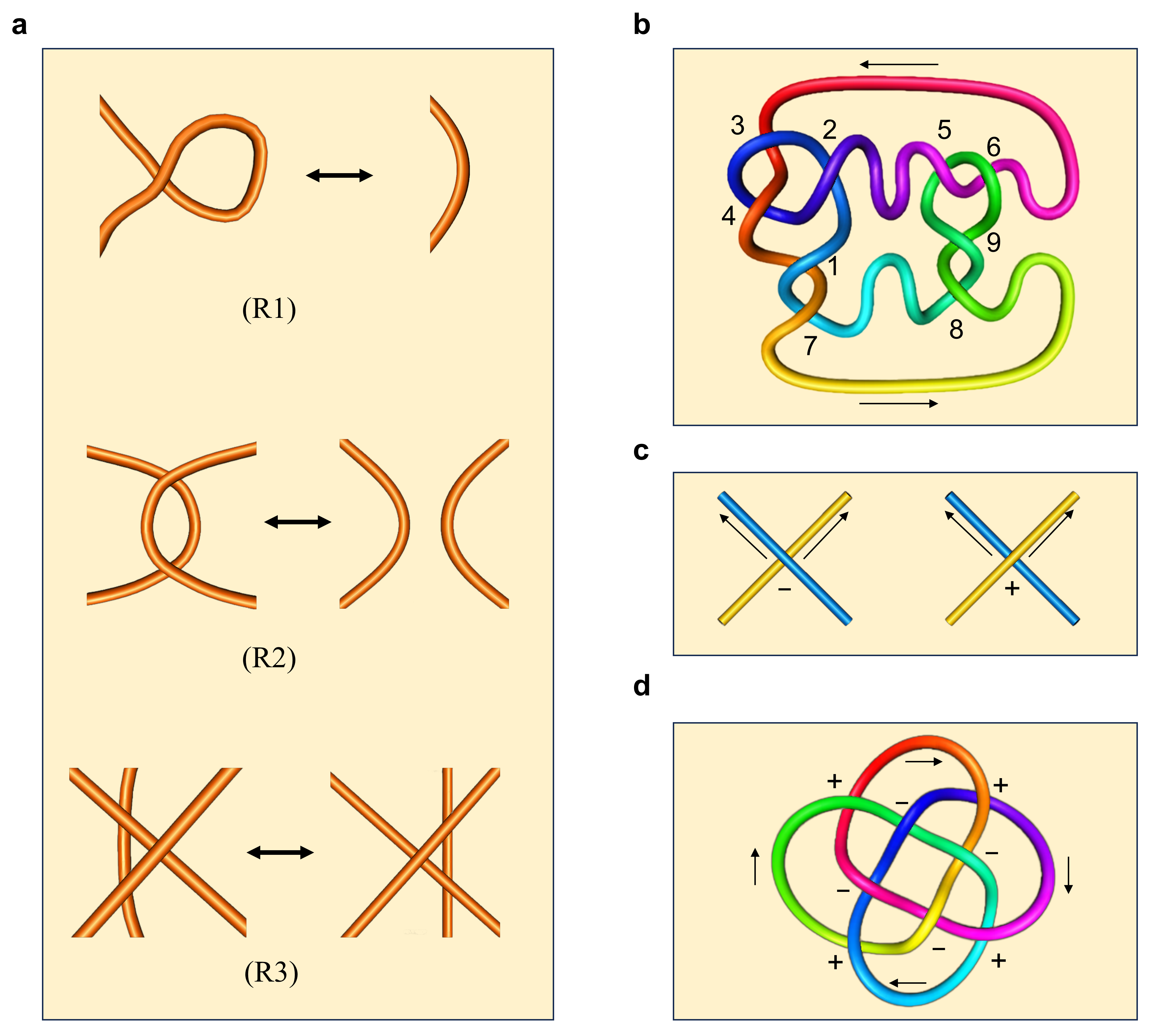}
  \caption{\textbf{a} The three types of Reidemeister moves. \textbf{b} The marked diagram of a knot can be used to obtain the Gauss code. \textbf{c} The left is the left-handed crossing, and the right is the right-handed crossing. \textbf{d} The knot with crossings marked by $+$ or $-$. The corresponding writhe number is $w(L)=4-4=0$.}
	\label{figure:knot_combination}
\end{figure}
K. Reidemeister et al. have shown that two knot diagrams belonging to the same knot can be transformed into each other by a sequence of the three Reidemeister moves up to ambient isotopy \cite{alexander1926types,reidemeister1927elementare}. Moreover, two knots are equivalent if and only if all their projections are equivalent \cite{burde2002knots}. This suggests that the equivalence relation of knots can be established using Reidemeister moves, which are more user-friendly compared to ambient isotopy. They also facilitate proving whether a quantity is a knot invariant.

A \emph{knot invariant} is a quantity defined on knots that remains unchanged under knot equivalence. The most common knot invariants include tricoloring \cite{kauffman1987state}, crossing number \cite{adams1994knot}, bridge number \cite{schubert1954numerische}, and the Jones polynomial \cite{jones1985polynomial}. However, these knot invariants cannot determine the equivalent class of knots; indeed, it is even difficult to determine if a knot is the trivial knot. This underscores the inadequacy of current knot invariants, prompting ongoing efforts to seek new ones. Among these knot invariants, the Jones polynomial stands out as one of the most successful. It encapsulates critical information regarding knot topology and structure, including symmetry, crossing distribution, and complexity. Furthermore, its profound links to fields such as topological quantum field theory and quantum braid theory in physics underscore its importance in understanding topological phase transitions and quantum states.

\subsection{Gauss code}

The Gauss code represents a knot diagram using a sequence of integer numbers \cite{gibson2011homotopy}. This digital representation facilitates recording and understanding of the knot diagram. Moreover, we can reconstruct the original knot diagram from its Gauss code. This implies that Gauss code holds significant importance in classifying knots and computing knot invariants.

Given a knot diagram $K$, one can obtain a Gauss code $G(K)$ as follows:
\begin{enumerate}
  \item Choose a crossing as the starting point and select a direction to begin from the starting point;
  \item Assign the starting crossing a value of 1, and then assign values of 2, 3, and so on to each subsequent unlabeled crossing along the chosen direction;
  \item For each crossing, we assign a sign. If the crossing is an overcrossing, the sign is positive; otherwise, it is negative.
\end{enumerate}
The integer sequence written down following the aforementioned procedure is what we refer to as the Gauss code. For example, see Figure \ref{figure:knot_combination}\textbf{b}. Starting from 1 and proceeding to 2, we obtain a sequence of numbers, denoted as $1,2,3,4,2,5,6,3,4,1,7,8,9,6,5,9,8,7$. By assigning a sign to each number based on the type of crossing, we get a new sequence of numbers:
\begin{equation*}
  +1,-2,+3,-4,+2,+5,-6,-3,+4,-1,+7,+8,-9,+6,-5,+9,-8,-7.
\end{equation*}
This sequence is the Gauss code for the knot in Figure \ref{figure:knot_combination}\textbf{b}.

For a Gauss code $C$, we can reconstruct a knot diagram $D(C)$. So, the natural question arises: for a knot diagram $K$, is the knot diagram $D(G(K))$ equivalent to $K$? In general, this is not entirely correct. To address this issue, people have introduced extended Gauss code. The construction of the extended Gauss code is similar to the Gauss code, with one key difference in how the signs of the integers are assigned. When the crossing is right-handed, the integer is assigned a positive value, and when it is left-handed, the integer is assigned a negative value. For Figure \ref{figure:knot_combination}\textbf{b}, by considering the right-handed or left-handed nature of each crossing, we obtain the extended Gauss code:
\begin{equation*}
\begin{split}
    & +1L,-2R,+3R,-4R,+2R,+5L,-6L,-3R,+4R,-1L,\\
    & +7L,+8R,-9R,+6L,-5L,+9R,-8R,-7L.
\end{split}
\end{equation*}

In theory, Gauss code helps us examine and understand information about knots, which allows us to study their properties. In computation, Gauss code can be utilized to calculate various knot invariants, such as the Jones polynomial, Alexander polynomial, and others. Furthermore, from an algorithmic perspective, digitizing and processing knot data through Gauss code are invaluable for computer-assisted knot research and computation.

\subsection{Kauffman bracket and Jones polynomial}

In the previous section, we concluded that to study the invariants of knots, it is sufficient to explore the invariance of knot diagrams under Reidemeister moves. From now on, our attention will be directed towards knot diagrams as we revisit the Kauffman bracket and Jones polynomial associated with them.

For a crossing, there is a 0-smoothing \raisebox{-0.1cm}{\udarc{0.3}} and a 1-smoothing \raisebox{-0.15cm}{\lrarc{0.3}}. The process of smoothing can be understood as untangling a crossing, as illustrated below.
\begin{eqnarray*}\label{equation:smoothing}
  \raisebox{-0.13cm}{\leftcross{0.2}} &\Longrightarrow& \raisebox{-0.1cm}{\udarc{0.3}}\quad +\quad \raisebox{-0.15cm}{\lrarc{0.3}} \\
  \raisebox{-0.13cm}{\rightcross{0.2}} &\Longrightarrow&\raisebox{-0.1cm}{\udarc{0.3}}\quad +\quad \raisebox{-0.15cm}{\lrarc{0.3}}
\end{eqnarray*}
    \indent A \emph{link} is a collection of knots that do not intersect but may be linked (or knotted) together. In particular, a knot is a link with only one component. If not explicitly stated, the links discussed in this paper are assumed to be orientable.

Given a knot $K$ and a crossing $x$ of $K$, we can create links by replacing the crossing $x$ with the 0-smoothing and the 1-smoothing, respectively. Let $\mathbf{Knot}$ denote the set of knots, and let $\mathbf{Link}$ denote the set of links. Given a link $L$, let $\mathcal{X}(L)$ denote the set of crossings of $L$. For each $x\in \mathcal{X}(L)$, the smoothing operators at $x$ lead to the 0-smoothing and the 1-smoothing maps $\rho_{0}, \rho_{1}: \mathbf{Link} \to \mathbf{Link}$ as $L \mapsto \rho_{0}(L,x)$ and $L \mapsto \rho_{1}(L,x)$, respectively. In the following construction of the Kauffman bracket, for unoriented knot, the smoothing is always performed on the undercrossing $\raisebox{-0.13cm}{\rightcross{0.2}}$.

The \emph{Kauffman bracket} is a bracket function $\langle -\rangle: \mathbf{Link}\to \mathbb{Z}[a,a^{-1}]$ satisfying:
\begin{enumerate}
  \item[$(a)$] $\langle \bigcirc\rangle=1$;
  \item[$(b)$] $\langle \bigcirc\cup L\rangle=(-a^{2}-a^{-2})\langle L\rangle$;
  \item[$(c)$] $\langle L\rangle =a\langle \rho_{0}(L,x)\rangle+a^{-1} \langle \rho_{1}(L,x)\rangle$ for any $x\in \mathcal{X}(L)$.
\end{enumerate}
Here, $\bigcirc$ denotes the trivial knot.

The Kauffman bracket does always exist, and it is uniquely determined in $\mathbb{Z}[a,a^{-1}]$. Now, let $n = |\mathcal{X}(L)|$ be the number of crossings of $L$. For each crossing, we have the options of performing 0-smoothing and 1-smoothing. Thus, we can obtain a total of $2^n$ different smoothing links. Each of these smoothing links is referred to as a \emph{state} of the link $L$. All the states together form a state cube.  Another description of the Kauffman bracket is given in terms of the state cube of a link \cite{kauffman1987state}. For a state $s$ of $L$, let $\alpha(s)$ and $\beta(s)$ denote the number of 0-smoothings and 1-smoothings of crossings in state $s$, respectively. The Kauffman bracket is
\begin{equation}
  \langle L\rangle = \sum\limits_{s}(-1)^{\alpha(s)-\beta(s)}(-a^{2}-a^{-2})^{\gamma(s)-1}.
\end{equation}
Here, $s$ runs through all the states of $L$, and $\gamma(s)$ is the number of circles of $L$ in the state $s$.

It is worth noting that the Kauffman bracket is invariant under the Reidemeister moves (R2) and (R3). However, the Kauffman bracket is not a knot invariant, as it is not invariant under (R1). To define a knot invariant, we first introduce the concept of the writhe number. Consider an oriented diagram of a link $L$. Let's define $w(L)$ as follows: with each crossing of $L$, we associate $+1$ if it is a right-handed crossing, and $-1$ if it is a left-handed crossing. For an example, see Figure \ref{figure:knot_combination}\textbf{c} and \textbf{d}. By summing these numbers at all crossings, we obtain the writhe number $w(L)$.

The \emph{Kauffman polynomial} (or normalized Kauman bracket) of a link $L$ is the polynomial defined as follows
\begin{equation}
  X_{L}(a)=(-a)^{-3w(L)}\langle L\rangle.
\end{equation}
The Kauffman polynomial is a knot invariant \cite{manturov2018knot}. By substituting $a$ in $X_{L}(t)$ with $t^{-\frac{1}{4}}$, we obtain the \emph{Jones polynomial}
\begin{equation}
  V_{L}(t)=X_{L}(t^{-\frac{1}{4}}).
\end{equation}
The Jones polynomial is a famous knot invariant introduced by V. Jones \cite{jones1985polynomial}.
\begin{remark}
With the previous notations, if we set $q = -a^{-2}$, then the Kauffman bracket can be described by the conditions
\begin{enumerate}
  \item[$(a')$] $\langle \bigcirc\rangle=q+q^{-1}$;
  \item[$(b')$] $\langle \bigcirc\cup L\rangle=(q+q^{-1})\langle L\rangle$;
  \item[$(c')$] $\langle L\rangle =\langle \rho_{0}(L,x)\rangle-q\langle \rho_{1}(L,x)\rangle$ for any $x\in \mathcal{X}(L)$.
\end{enumerate}
Let $n_{+}$ be the number of right-handed crossings in $\mathcal{X}(L)$, and let $n_{-}$ be the number of left-handed crossings in $\mathcal{X}(L)$. The unnormalized Jones polynomial is defined by
\begin{equation}
  \hat{J}(L)=(-1)^{n_{-}}q^{n_{+}-2n_{-}}\langle L\rangle.
\end{equation}
Then the Jones polynomial of $L$ is defined as $J(L)= \hat{J}(L)/(q+q^{-1})$. This definition is more convenient for categorifying the Jones polynomial, as specifically detailed in the literature \cite{khovanov2000categorification}.
\end{remark}

\begin{figure}[htb!]
	\centering
	\includegraphics[width=0.9\textwidth]{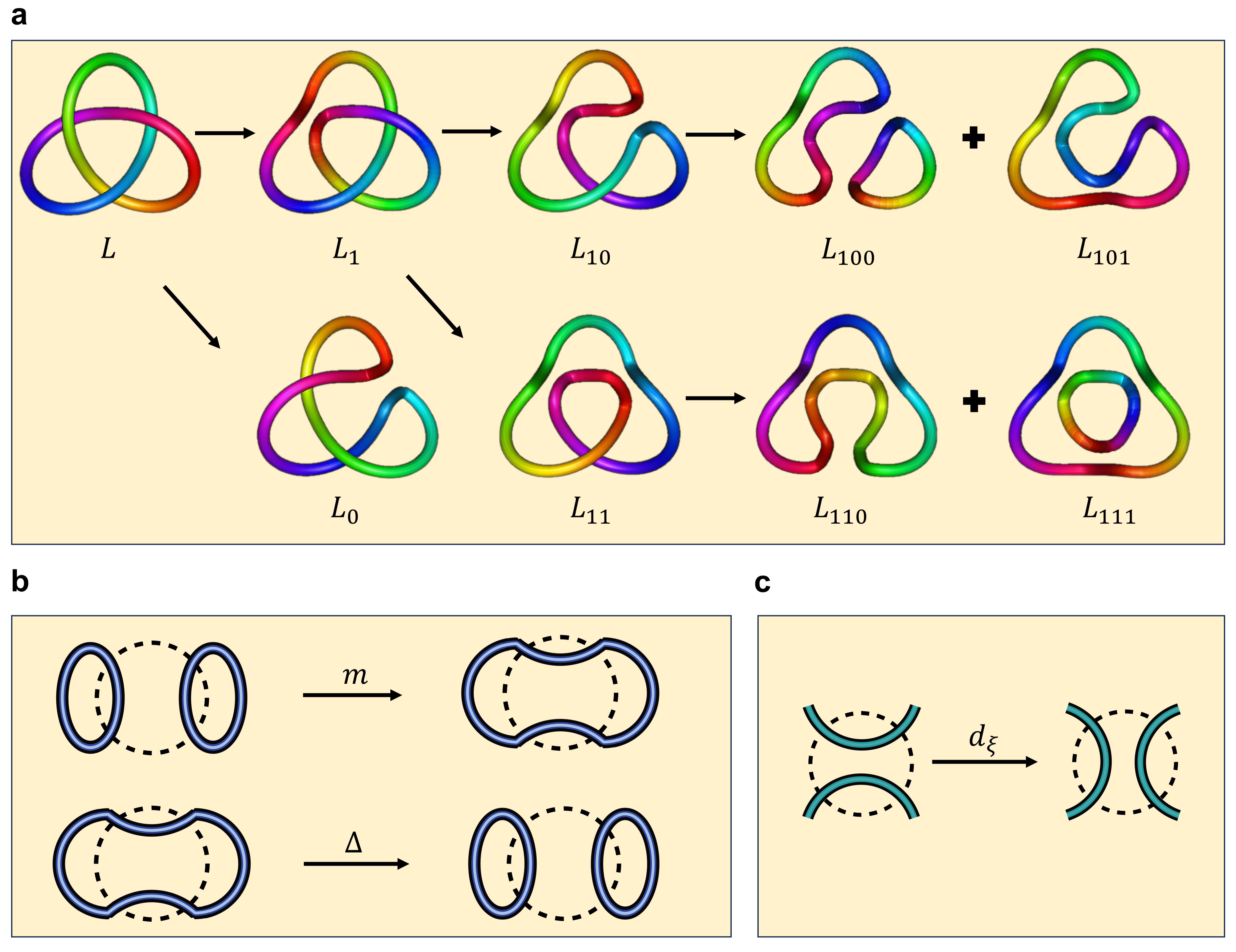}\\
	\caption{{\bf a} The links by conducting $0$-smoothings and $1$-smoothings of the undercrossings of a left-handed trefoil.
	{\bf b} Two circles merging into one, or one circle splitting into two.
	{\bf c} Illustration of differential.
	}\label{figure:knot_differential}
\end{figure}

\begin{example}\label{example:jones}
Let $L$ be a left-handed trefoil. Consider the smoothing of $L$ shown in Figure \ref{figure:knot_differential}\textbf{a}. For example, the link $L_{100}$ represents the original link after performing one $1$-smoothing, followed by two $0$-smoothings. Note that
\begin{equation*}
  \begin{split}
    \langle L_{100}\rangle &=  \langle \bigcirc\cup  \bigcirc\rangle= (q+q^{-1})^{2}, \\
    \langle L_{101}\rangle &=\langle\bigcirc\rangle= (q+q^{-1}),\\
    \langle L_{110}\rangle &=\langle\bigcirc\rangle=(q+q^{-1}),\\
    \langle L_{111}\rangle &=\langle\bigcirc \cup \bigcirc\rangle= (q+q^{-1})^{2}.
  \end{split}
\end{equation*}
It follows that
\begin{equation*}
  \begin{split}
  \langle L_{10}\rangle= \langle L_{100}\rangle-q\langle L_{101}\rangle=q^{-1}(q+q^{-1}),\\
  \langle L_{11}\rangle= \langle L_{110}\rangle-q\langle L_{111}\rangle =-q^{2}(q+q^{-1}).
  \end{split}
\end{equation*}
Thus we have $\langle L_{1}\rangle=\langle L_{10}\rangle-q\langle L_{11}\rangle=(q^{-1}+q^{3})(q+q^{-1})$. By a similar calculation, we can obtain $\langle L_{0}\rangle=q^{-2}(q+q^{-1})$.
Hence, we obtain
\begin{equation*}
  \langle L\rangle=\langle L_{0}\rangle-q\langle L_{1}\rangle=(q^{-2}-1-q^{4})(q+q^{-1}).
\end{equation*}
Thus the unnormalized Jones polynomial of $L$ is
\begin{equation*}
  \hat{J}(L)=(-1)^{3}q^{-6}\langle L\rangle=q^{-1}+q^{-3}+q^{-5}-q^{-9}.
\end{equation*}
And the Jones polynomial of $L$ is $q^{-2}+q^{-6}-q^{-8}$.
\end{example}

\subsection{Khovanov homology}

Khovanov homology was introduced by Mikhail Khovanov around the year 2000. It is regarded as a categorification of the Jones polynomial, providing a topological interpretation of the Jones polynomial \cite{bar2002khovanov,manturov2018knot}. Specifically, the graded Euler characteristic of Khovanov homology corresponds to the Jones polynomial. Compared to the Jones polynomial, Khovanov homology contains more information. Notably, Khovanov homology can detect the unknot \cite{kronheimer2011khovanov}.

\textbf{Graded dimension} Let $V=\sum\limits_{k\in \mathbb{Z}}V_{k}$ be a graded vector space. The graded dimension of $V$ is the power series
   \begin{equation*}
    \qdim  V=\sum\limits_{k\in \mathbb{Z}}q^{k}\dim V_{k}.
   \end{equation*}
For example, if $V$ is generated by three elements $v_{-1},v_{0},v_{1}$ with the grading $-1$, $0$, $1$, respectively, then the graded dimension of $V$ is $q^{-1}+1+q$.

\textbf{Degree shift} The degree shift on a graded vector space $V=\sum\limits_{k\in \mathbb{Z}}V_{k}$ is an operation $\cdot \{l\}$ such that $W\{l\}_{k}=W_{k-l}$. By definition, one has that
  \begin{equation*}
   \qdim  V\{l\}=q^{l}\qdim  V.
  \end{equation*}
\indent \textbf{Height shift} Let $C$ denote the cochain complex $\cdots \to C^{n}\stackrel{d^{n}}{\to}C^{n+1}\to \cdots$. The height shift of $C^{\ast}$ is the operation $\cdot [m]$ such that
  $C[m]$ is a cochain complex with $C[m]^{n}=C^{n-m}$ and $d[m]^{n}=d^{n-m}:C^{n-m} \to C^{n-m+1}$.

Recall that for a link, we have a state cube $\{0,1\}^{\mathcal{X}(L)}$. Each state $s$ in $\{0,1\}^{\mathcal{X}(L)}$ can be represented as $(s_{1},s_{2},\dots,s_{n})$, where $n=|\mathcal{X}(L)|$. Now, let $\mathbb{K}$ be the ground field, and let $V$ be a graded vector space with two generators $v_{-},v_{+}$. Then $\qdim V=q^{-1}+q$. For each state $s\in \{0,1\}^{\mathcal{X}(L)}$, we have a space $V_{s}(L)=V^{\otimes c(s)}\{\ell(s)\}$, where $c(s)$ is the number of circles in the smoothing of $L$ at state $s$, and $\ell (s)=\sum\limits_{i=1}^{n} s_{i}$ is the number of ones in the representation of $s$. The $k$-th chain group of $L$ is defined
\begin{equation}
 [[L]]^{k}:=\bigoplus\limits_{s:\ell(s)=k}  V_{c(s)}(L).
\end{equation}
Then $[[L]]$ is a graded vector space. Furthermore, we can obtain a cochain complex $[[L]]\{n_{+}-2n_{-}\}$. The Khovanov chain group of $L$ is defined by
\begin{equation}
   \mathcal{C}(L):=[[L]][-n_{-}]\{n_{+}-2n_{-}\}.
\end{equation}
More precisely, we have
\begin{equation}
  \mathcal{C}^{k}(L)=\bigoplus\limits_{\ell(s)=k+n_{-}}  V^{\otimes c(s)}\{\ell(s)+n_{+}-2n_{-}\}.
\end{equation}
Note that $\mathcal{C}^{k}(L)$ itself is a graded vector space. Thus there is a natural graded structure on $\mathcal{C}^{k}(L)$. To obtain a cochain complex, we will endow $\mathcal{C}(L)$ with a differential as follows. Consider the state cube $\{0,1\}^{\mathcal{X}(L)}$ with $n\cdot 2^{n-1}$ edges. Each of the edges is of the form
\begin{equation*}
  (s_{1},s_{2},\dots,s_{i-1},0,s_{i+1},\dots,s_{n})\to (s_{1},s_{2},\dots,s_{i-1},1,s_{i+1},\dots,s_{n}).
\end{equation*}
We denote the edge by $\xi=(\xi_{1},\xi_{2},\dots,\xi_{i-1},\star,\xi_{i+1},\dots,\xi_{n})$. Let $\sgn(\xi)=(-1)^{\xi_{1}+\cdots+\xi_{i-1}}$, and let $|\xi|=\sum\limits_{t\neq i}\xi_{t}$. The differential $d^{k}:\mathcal{C}^{k}(L)\to \mathcal{C}^{k+1}(L)$ is defined by $d=\sum\limits_{|\xi|=k}\sgn(\xi)\cdot d_{\xi}$. Now we will review the construction of $d_{\xi}$. Note that an edge of the state cube connects two adjacent states. The two states differ by just one crossing's smoothing, which implies that the diagrams corresponding to these two states differ by just one circle. Geometrically, this is manifested as two circles merging into one, or one circle splitting into two, see Figure \ref{figure:knot_differential}\textbf{b} and \textbf{c}.

Algebraically, the above process can be understood as $V\otimes V\to V$ or $V\to V\otimes V$, because the word length of the term $V^{\otimes c(s)}\{\ell(s)+n_{+}-2n_{-}\}$ is equal to the number of circles. The map $d_{\xi}:\mathcal{C}^{k}(L)\to \mathcal{C}^{k+1}(L)$ is defined as
\begin{equation}\label{equation:multiplication}
  m:V\otimes V\to V,\quad m:\left\{
                             \begin{array}{ll}
                               v_{+}\otimes v_{+}\mapsto v_{+},\quad v_{-}\otimes v_{+}\mapsto v_{-}, \\
                               v_{+}\otimes v_{-}\mapsto v_{-},\quad v_{-}\otimes v_{-}\mapsto 0
                             \end{array}
                           \right.
\end{equation}
on the components involved in merging,
\begin{equation}\label{equation:comultiplication}
  \Delta:V\to V\otimes V,\quad \Delta:\left\{
                             \begin{array}{ll}
                               v_{+} \mapsto v_{+}\otimes v_{-}+v_{-}\otimes v_{+}, \\
                               v_{-}\mapsto v_{-}\otimes v_{-}
                             \end{array}
                           \right.
\end{equation}
on the components involved in splitting, and identity at other components. It can be verified that the above construction indeed provides a differential structure on $\mathcal{C}(L)$. Therefore, $\mathcal{C}(L)$ is a cochain complex, called the \emph{Khovanov complex}. The \emph{Khovanov (co)homology} of $L$ is defined by
\begin{equation*}
  H^{k}(L):=H^{k}(\mathcal{C}(L)),\quad k\geq 1.
\end{equation*}
The Khovanov homology is a well-known knot invariant, which can decode the Jones polynomial. We call the rank of $H^{k}(L)$ the \emph{$k$-th Betti polynomial} of $L$, denoted by $\beta_{k}(q)$.

The \emph{graded Poincar\'{e} polynomial} of $\mathcal{C}(L)$ is defined by
\begin{equation}
  Kh(L)=\sum\limits_{k}\qdim H^{k}(L)\cdot t^{k}.
\end{equation}
By taking $t=-1$, we have the \emph{graded Euler characteristic} of $L$ given by
\begin{equation}
  \mathcal{X}_{q}(L)=\sum\limits_{k}(-1)^{k}\qdim H^{k}(L).
\end{equation}
It is worth noting that $\mathcal{X}_{q}(L)=\sum\limits_{k}(-1)^{k}\qdim \mathcal{C}^{k}(L)$.
A famous result assert that the graded Euler characteristic of $L$ equals to the unnormalized Jones polynomial of $L$.
\begin{theorem}
Let $L$ be a link. We have $\mathcal{X}_{q}(L)=\hat{J}(L)$.
\end{theorem}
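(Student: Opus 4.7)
The plan is to bypass the homology entirely by invoking the graded Euler--Poincar\'{e} principle, so that $\sum_{k}(-1)^{k}\qdim H^{k}(L)=\sum_{k}(-1)^{k}\qdim \mathcal{C}^{k}(L)$; this identity is already asserted in the excerpt and holds because the differentials built from $m$ and $\Delta$, together with the height shift $[-n_{-}]$ and the grading shift $\{n_{+}-2n_{-}\}$, are homogeneous of bidegree $(1,0)$ with respect to the cohomological and $q$-gradings. Consequently, it suffices to prove
\begin{equation*}
\sum_{k}(-1)^{k}\qdim \mathcal{C}^{k}(L)=\hat{J}(L).
\end{equation*}

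Next I would expand the chain-level side directly from the definition. Using $\qdim V=q+q^{-1}$ and the behaviour $\qdim W\{l\}=q^{l}\qdim W$, the state $s$ with $\ell(s)$ ones and $c(s)$ circles contributes $q^{\ell(s)+n_{+}-2n_{-}}(q+q^{-1})^{c(s)}$ to the chain group sitting in cohomological degree $k=\ell(s)-n_{-}$. Combining the sign $(-1)^{k}=(-1)^{n_{-}}(-1)^{\ell(s)}$ with the factor $q^{\ell(s)}$ produces $(-q)^{\ell(s)}$, so that
\begin{equation*}
\sum_{k}(-1)^{k}\qdim \mathcal{C}^{k}(L)=(-1)^{n_{-}}q^{n_{+}-2n_{-}}\sum_{s\in\{0,1\}^{\mathcal{X}(L)}}(-q)^{\ell(s)}(q+q^{-1})^{c(s)}.
\end{equation*}

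I would then evaluate the Kauffman bracket through its state sum, derived by iterating the $q$-normalized skein relation $(c')$ at every crossing. Each crossing contributes a factor $1$ for a $0$-smoothing and $-q$ for a $1$-smoothing; once all crossings are resolved, one is left with $c(s)$ disjoint circles whose bracket equals $(q+q^{-1})^{c(s)}$ by $(a')$ and $(b')$. This gives
\begin{equation*}
\langle L\rangle =\sum_{s\in\{0,1\}^{\mathcal{X}(L)}}(-q)^{\ell(s)}(q+q^{-1})^{c(s)},
\end{equation*}
and multiplying by the normalization $(-1)^{n_{-}}q^{n_{+}-2n_{-}}$ recovers $\hat{J}(L)$ by the definition given in the Remark, matching the chain-level expression term by term.

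The only real difficulty is bookkeeping: one must verify that the height shift $[-n_{-}]$ converts the sign $(-1)^{\ell(s)}$ into the alternating sign $(-1)^{k}$ appearing in the graded Euler characteristic, while the grading shift $\{n_{+}-2n_{-}\}$ contributes precisely the overall $q^{n_{+}-2n_{-}}$ that distinguishes $\hat{J}(L)$ from $\langle L\rangle$. Once these two shifts in the definition of $\mathcal{C}(L)$ are recognized as the categorified counterpart of the Kauffman-to-Jones renormalization, the comparison of signs and powers of $q$ is immediate.
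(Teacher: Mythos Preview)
Your argument is correct and is the standard proof of this categorification statement. Note, however, that the paper does not actually supply its own proof: the theorem is quoted as ``a famous result'' and left unproved, the only hint being the sentence immediately preceding it, which records the Euler--Poincar\'{e} identity $\mathcal{X}_{q}(L)=\sum_{k}(-1)^{k}\qdim \mathcal{C}^{k}(L)$ at the chain level. Your proposal simply completes what the paper omits, by expanding that chain-level quantity as the state sum $(-1)^{n_{-}}q^{n_{+}-2n_{-}}\sum_{s}(-q)^{\ell(s)}(q+q^{-1})^{c(s)}$ and identifying the inner sum with $\langle L\rangle$ via iteration of rule~$(c')$; this is exactly the argument one finds in \cite{bar2002khovanov}. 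There is nothing to correct.
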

The above result demonstrates that Khovanov homology provides a categorical interpretation of the Jones polynomial, thereby establishing the significant role of Khovanov homology in knot theory.
In this work, our focus lies in applying the features of Khovanov homology to analyze and study knots with spatial twists. Persistence is the core principle in analyzing the spatial geometric structure of knots. This prompts us to investigate evolutionary Khovanov homology in subsequent sections.

\begin{example}
Let $L$ be the left-handed trefoil. All the crossings are left-handed. Then we have the Khovanov cochain complex of $L$ given by
\begin{equation*}
  \xymatrix{
 0\ar@{->}[r]&\mathcal{C}^{-3}(L)\ar@{->}[r]^{d^{-3}}&\mathcal{C}^{-2}(L)\ar@{->}[r]^{d^{-2}}&\mathcal{C}^{-1}(L)\ar@{->}[r]^{d^{-1}}&\mathcal{C}^{0}(L)\ar@{->}[r]&0.
 }
\end{equation*}
Here, the space $\mathcal{C}^{k}(L)$ are obtained by the circles of states listed as follows.
\begin{equation*}
\begin{split}
    \mathcal{C}^{-3}(L)=&\overbrace{V\otimes V\otimes V}^{(0,0,0)}, \\
    \mathcal{C}^{-2}(L)=&\overbrace{V\otimes V}^{(1,0,0)}\oplus \overbrace{V\otimes V}^{(0,1,0)}\oplus \overbrace{V\otimes V}^{(0,0,1)},\\
    \mathcal{C}^{-1}(L)=&\overbrace{V}^{(1,1,0)}\oplus \overbrace{V}^{(1,0,1)}\oplus \overbrace{V}^{(0,1,1)},\\
    \mathcal{C}^{0}(L)=&\overbrace{V\otimes V}^{(1,1,1)}.
\end{split}
\end{equation*}
Recall that $V$ has two generators $v_{+}$ and $v_{-}$. Thus the space $\mathcal{C}^{-3}(L)$ has the basis
\begin{equation*}
  \begin{split}
    &v_{+}\otimes v_{+}\otimes v_{+}, v_{+}\otimes v_{+}\otimes v_{-}, v_{+}\otimes v_{-}\otimes v_{+}, v_{-}\otimes v_{+}\otimes v_{+},\\
    &v_{+}\otimes v_{-}\otimes v_{-}, v_{-}\otimes v_{+}\otimes v_{-}, v_{-}\otimes v_{-}\otimes v_{+}, v_{-}\otimes v_{-}\otimes v_{-},
\end{split}
\end{equation*}
the space $\mathcal{C}^{-2}(L)$ has the basis
\begin{equation*}
\begin{split}
  &(v_{+}\otimes v_{+},0,0),(v_{+}\otimes v_{-},0,0),(v_{-}\otimes v_{+},0,0),(v_{-}\otimes v_{-},0,0), \\
  &(0,v_{+}\otimes v_{+},0),(0,v_{+}\otimes v_{-},0),(0,v_{-}\otimes v_{+},0),(0,v_{-}\otimes v_{-},0), \\
  &(0,0,v_{+}\otimes v_{+}),(0,0,v_{+}\otimes v_{-}),(0,0,v_{-}\otimes v_{+}),(0,0,v_{-}\otimes v_{-}),
\end{split}
\end{equation*}
the space $\mathcal{C}^{-1}(L)$ is generated by
\begin{equation*}
  (v_{+},0,0),(v_{-},0,0),(0,v_{+},0),(0,v_{-},0),(0,0,v_{+}),(0,0,v_{-}),
\end{equation*}
and the space $\mathcal{C}^{0}(L)$ has the basis
\begin{equation*}
  v_{+}\otimes v_{+},v_{+}\otimes v_{-},v_{-}\otimes v_{+},v_{-}\otimes v_{-}.
\end{equation*}
Thus the left representation matrix of the differentials $d^{-3},d^{-2},d^{-1}$ with respect to the chosen basis are given by
\begin{equation*}
  B_{-3}=\left(
          \begin{array}{cccccccccccc}
            1 & 0 & 0 & 0 & 1 & 0 & 0 & 0 & 1 & 0 & 0 & 0 \\
            0 & 1 & 0 & 0 & 0 & 1 & 0 & 0 & 0 & 1 & 0 & 0 \\
            0 & 0 & 1 & 0 & 0 & 1 & 0 & 0 & 0 & 0 & 1 & 0 \\
            0 & 0 & 1 & 0 & 0 & 0 & 1 & 0 & 0 & 1 & 0 & 0 \\
            0 & 0 & 0 & 1 & 0 & 0 & 0 & 0 & 0 & 0 & 0 & 1 \\
            0 & 0 & 0 & 1 & 0 & 0 & 0 & 1 & 0 & 0 & 0 & 0 \\
            0 & 0 & 0 & 0 & 0 & 0 & 0 & 1 & 0 & 0 & 0 & 1 \\
            0 & 0 & 0 & 0 & 0 & 0 & 0 & 0 & 0 & 0 & 0 & 0 \\
          \end{array}
        \right),  B_{-2}=\left(
          \begin{array}{cccccc}
            -1 & 0 & -1 & 0 & 0 & 0 \\
            0 & -1 & 0 & -1 & 0 & 0 \\
            0 & -1 & 0 & -1 & 0 & 0 \\
            0 & 0 & 0 & 0 & 0 & 0 \\
            1 & 0 & 0 & 0 & -1 & 0 \\
            0 & 1 & 0 & 0 & 0 & -1 \\
            0 & 1 & 0 & 0 & 0 & -1 \\
            0 & 0 & 0 & 0 & 0 & 0 \\
            0 & 0 & 1 & 0 & 1 & 0 \\
            0 & 0 & 0 & 1 & 0 & 1 \\
            0 & 0 & 0 & 1 & 0 & 1 \\
            0 & 0 & 0 & 0 & 0 & 0 \\
          \end{array}
        \right),
\end{equation*}
and
\begin{equation*}
 B_{-1}= \left(
  \begin{array}{cccc}
    0 & 1 & 1 & 0 \\
    0 & 0 & 0 & 1 \\
    0 & -1 & -1 & 0 \\
    0 & 0 & 0 & -1 \\
    0 & 1 & 1 & 0 \\
    0 & 0 & 0 & 1 \\
  \end{array}
\right).
\end{equation*}
By step-by-step calculation, we can obtain the corresponding Khovanov homology presented in Table \ref{table:trefoil_homology}.
\begin{table}[htb!]
  \centering
  \caption{The Khovanov homology $H^{k,l}(L)$ of $L$.}\label{table:trefoil_homology}
  \begin{tabular}{c|c|c|c|c}
    \hline
    $H^{k,l}(L)$ & $k=0$ & $k=-1$ & $k=-2$ & $k=-3$ \\
    \hline
    $l=-1$ & $[v_{+}\otimes v_{+}]$ & 0 & 0 & 0 \\
    $l=-2$ & 0 & 0 & 0 & 0 \\
    $l=-3$ & $[v_{+}\otimes v_{-}]$ & 0 & 0 & 0 \\
    $l=-4$ & 0 & 0 & 0 & 0 \\
    $l=-5$ & 0 & 0 & $[v_{+}\otimes v_{-}-v_{-}\otimes v_{+}]$ & 0 \\
    $l=-6$ & 0 & 0 & 0 & 0 \\
    $l=-7$ & 0 & 0 & $[v_{-}\otimes v_{-}]_{2}$ & 0 \\
    $l=-8$ & 0 & 0 & 0 & 0 \\
    $l=-9$ & 0 & 0 & 0 & $[v_{-}\otimes v_{-}\otimes v_{-}]$ \\
    \hline
  \end{tabular}
\end{table}
Here, $k$ is the height and $l$ is the degree of the homology generators. The generator $[v_{-}\otimes v_{-}]_{2}$ exhibits a torsion of 2, meaning that $2[v_{-}\otimes v_{-}]_{2}=0$. The remaining generators are free. Thus we have
\begin{equation*}
  \begin{split}
    H^{-3}(L) &\cong \mathbb{K}, \\
    H^{-2}(L) &\cong \left\{
                       \begin{array}{ll}
                         \mathbb{K}\oplus \mathbb{K}, & \hbox{$\mathbb{K}$ is the field of characteristic 2;} \\
                         \mathbb{K}, & \hbox{otherwise.}
                       \end{array}
                     \right.\\
    H^{-1}(L) &=0,\\
    H^{0}(L) &\cong \mathbb{K}\oplus \mathbb{K}.
  \end{split}
\end{equation*}
Consider the case that $2$ is invertible in $\mathbb{K}$. The corresponding unnormalized Jones polynomial is given by
\begin{equation*}
  \hat{J}(L)=\mathcal{X}_{q}(L)=\sum\limits_{k}(-1)^{k}\qdim H^{k}(L)=q^{-1}+q^{-3}+q^{-5}-q^{-9}.
\end{equation*}
This coincides with the result shown in Example \ref{example:jones}.
\end{example}

\section{Evolutionary Khovanov homology}\label{section:evolutionary}

We encounter challenges in establishing a filtration process for links, to the extent that we lack even the concept of sublinks. In fact, morphisms in the category of links are provided by cobordisms, and cobordism constructions are geometric in nature. This presents a challenge in the application of links. Thus directly studying the filtration process on the category of links is not a favorable approach. Therefore, in order to obtain a persistent process for link versions, we consider establishing filtration from the perspective of Khovanov cochain complexes of links.

\subsection{Smoothing link}

Let $L$ be a link diagram. Let $x\in \mathcal{X}(L)$ be a crossing of $L$.  At crossing $x$, there are two smoothing options: the 0-smoothing denoted as $\rho_{0}(L,x)$ and the 1-smoothing denoted as $\rho_{1}(L,x)$. It is worth noting that $2^{\mathcal{X}(L)}=2^{\mathcal{X}(\rho_{0}(L,x))}\sqcup 2^{\mathcal{X}(\rho_{1}(L,x))}$. Thus the Khovanov chain groups of $\rho_{0}(L,x)$ and $\rho_{1}(L,x)$ are subspaces of the Khovanov chain group of $L$ without considering the gradings. Moreover, even when we consider gradings, the Khovanov complex $\mathcal{C}(\rho_{0}(L,x))$ or $\mathcal{C}(\rho_{1}(L,x))$ can still be a subcomplex of $\mathcal{C}(L)$ in certain cases.

When $x$ is a left-handed crossing. Assume that $n=|\mathcal{X}(L)|$ is the number of crossing of $L$. Each crossing in $\mathcal{X}(L)$ can be written of the form $(s_{1},s_{2},\dots,s_{n})$. Let $\lambda$ be the index of the crossing $x$ in $\mathcal{X}(L)$. We have a map $j_{0}:2^{\mathcal{X}(\rho_{0}(L,x))}\to 2^{\mathcal{X}(L)}$ given by
\begin{equation*}
  (s_{1},s_{2},\dots,s_{n-1})\to (s_{1},\dots,s_{\lambda-1},1,s_{\lambda},\dots,s_{n-1}).
\end{equation*}
Let $n_{-,0}$ be the number of left-handed crossings in $\mathcal{X}(\rho_{0}(L,x))$, and let $n_{+,0}$ be the number of right-handed crossings in $\mathcal{X}(\rho_{0}(L,x))$. It follows that
\begin{equation*}
  c(s)=c(j_{0}(s)),\quad n_{-,0}=n_{-}-1,\quad n_{+,0}=n_{+},\quad \ell(s)=\ell(j_{0}(s))-1.
\end{equation*}
Then we have an isomorphism of vector spaces
\begin{equation*}
  V^{\otimes c(s)}\{\ell(s)+n_{+}-2n_{-}\}\cong V^{\otimes c(j_{0}(s))}\{\ell(j_{0}(s))+n_{+,0}-2n_{-,0}\},
\end{equation*}
which is given by the degree shift. The degree difference is
\begin{equation*}
  \ell(j_{0}(s))+n_{+,0}-2n_{-,0}-\ell(s)-n_{+}+2n_{-}=1.
\end{equation*}
The height of both side are equal $\ell(s)-n_{-}=\ell(j_{0}(s))-n_{-,0}$. Thus the induced map
\begin{equation*}
  i_{0}:\mathcal{C}(\rho_{0}(L,x)) \to \mathcal{C}(L)
\end{equation*}
is an inclusion of degree -1 shift from the Khovanov complex $\mathcal{C}(\rho_{0}(L,x))$ to the Khovanov complex $\mathcal{C}(L)$. Moreover, one can verify $i_{0}d=di_{0}$ step by step by confirming $i_{0}d_{\xi}=d_{\xi}i_{0}$ at each stage. Hence, $\mathcal{C}(\rho_{0}(L,x))$ is the subcomplex of $\mathcal{C}(L)$.

When $x$ is a right-handed crossing. We can verify that $\mathcal{C}(\rho_{1}(L,x))$ is a subcomplex of $\mathcal{C}(L)$ using a similar approach as described above. Consider the map $j_{1}:2^{\mathcal{X}(\rho_{1}(L,x))}\to 2^{\mathcal{X}(L)}$ given by
\begin{equation*}
  (s_{1},s_{2},\dots,s_{n-1})\to (s_{1},\dots,s_{\lambda-1},0,s_{\lambda},\dots,s_{n-1}).
\end{equation*}
We can obtain a injection $i_{1}:\mathcal{C}(\rho_{1}(L,x)) \to \mathcal{C}(L)$ of degree 1 shift from the Khovanov complex $\mathcal{C}(\rho_{1}(L,x))$ to the Khovanov complex $\mathcal{C}(L)$. Thus, we have the following proposition.

\begin{proposition}\label{proposition:link}
Let $L$ be a link, and let $x$ be a crossing of $L$.
If $x$ is a left-handed crossing, $\mathcal{C}(\rho_{0}(L,x))$ is a subcomplex of $\mathcal{C}(L)$. If $x$ is a right-handed crossing, $\mathcal{C}(\rho_{1}(L,x))$ is a subcomplex of $\mathcal{C}(L)$.
\end{proposition}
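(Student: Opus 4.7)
The plan is to prove both statements by constructing an explicit injective chain map between Khovanov complexes and then verifying that it intertwines the differentials. I treat the left-handed case in detail; the right-handed case is parallel with $j_1$ in place of $j_0$.

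First, I would use the set-level map $j_0$ to identify $2^{\mathcal{X}(\rho_0(L,x))}$ with the codimension-one face of the cube $2^{\mathcal{X}(L)}$ on which the $\lambda$-th coordinate equals $1$. Geometrically, the $1$-smoothing of $L$ at the left-handed crossing $x$ produces the same arrangement of labeled arcs as $\rho_0(L,x)$, giving $c(s) = c(j_0(s))$. Combined with $n_{-,0} = n_- - 1$, $n_{+,0} = n_+$, and $\ell(j_0(s)) = \ell(s) + 1$, each summand $V^{\otimes c(s)}\{\ell(s) + n_{+,0} - 2n_{-,0}\}$ of $\mathcal{C}(\rho_0(L,x))$ is identified with the corresponding summand of $\mathcal{C}(L)$ up to a single uniform shift in internal degree, while the homological degrees $\ell(s) - n_{-,0}$ and $\ell(j_0(s)) - n_-$ agree. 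Assembling these identifications over all states yields an injective graded linear map $i_0 \colon \mathcal{C}(\rho_0(L,x)) \to \mathcal{C}(L)$.

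The main step, and the principal obstacle, is to verify $i_0 d = d i_0$. Both differentials decompose as signed sums $\sum_\xi \sgn(\xi)\, d_\xi$ over edges of their respective state cubes, so it suffices to check one edge at a time. Every edge $\xi$ of the cube of $\rho_0(L,x)$ lifts to a unique edge $j_0(\xi)$ of the cube of $L$ that leaves coordinate $\lambda$ fixed at $1$, and since the two diagrams differ only at the locked crossing $x$, the local circle reconfiguration along $\xi$ and along $j_0(\xi)$ is identical. Consequently the algebraic edge map — either the multiplication $m$ or the comultiplication $\Delta$ — is the same on both sides. What needs care is the Koszul sign: $\sgn(j_0(\xi))$ picks up an extra factor of $-1$ exactly when the flipped coordinate lies to the right of the inserted $1$ at position $\lambda$. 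I would absorb this discrepancy by twisting $i_0$ on each summand by $(-1)^{\sigma(s)}$, where $\sigma(s)$ counts an appropriate tail of the state (for instance, the number of indices $i > \lambda$ with $s_i = 1$). After this twist, $i_0 d_\xi = d_{j_0(\xi)} i_0$ edge by edge, so $i_0$ is a chain map and exhibits $\mathcal{C}(\rho_0(L,x))$ as a subcomplex of $\mathcal{C}(L)$.

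The right-handed case is handled by replacing $j_0$ with $j_1$: the new map inserts $0$ at position $\lambda$, with $n_{+,1} = n_+ - 1$, $n_{-,1} = n_-$, and $\ell(j_1(s)) = \ell(s)$, producing an inclusion of shift $+1$. The sign verification is symmetric, and in fact simpler because the inserted coordinate is $0$, so no Koszul correction is needed. Thus $\mathcal{C}(\rho_1(L,x))$ embeds as a subcomplex of $\mathcal{C}(L)$ by the same argument.
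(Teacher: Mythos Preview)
Your proposal is correct and follows essentially the same route as the paper: define $j_0$ (resp.\ $j_1$) as the face inclusion of state cubes, match circle counts and shifts to get an injective graded map $i_0$ (resp.\ $i_1$), and then check commutation with the differential edge by edge. In fact you are more careful than the paper on one point: the paper simply asserts that one verifies $i_0 d_\xi = d_\xi i_0$ at each stage, whereas you correctly observe that inserting a $1$ at position $\lambda$ alters the Koszul sign $\sgn(\xi)$ when the flipped coordinate lies to the right of $\lambda$, and you repair this with the twist $(-1)^{\sigma(s)}$; your remark that no such correction is needed in the right-handed case (inserting a $0$) is also accurate.
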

The construction described above is called the \emph{smoothing link}, denoted by $\rho_{x}L$. Note that $\rho_{x} L=\rho_{0}(L,x)$ if $x$ is left-handed, and $\rho_{x} L=\rho_{1}(L,x)$ if $x$ is right-handed. By construction, we have the following result.
\begin{lemma}\label{lemma:smoothing}
Let $L$ be a link, and let $x,y$ be crossings of $L$. Then we have $\rho_{x}\rho_{y}L=\rho_{y}\rho_{x}L$.
\end{lemma}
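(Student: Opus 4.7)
My plan is to reduce the statement to the obvious fact that smoothing is a local operation at the crossing and does not affect any other crossing of the diagram, in particular not the handedness of other crossings.

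First I would unpack the definition of $\rho_x$. By construction $\rho_x L$ equals $\rho_0(L,x)$ when $x$ is left-handed and $\rho_1(L,x)$ when $x$ is right-handed, and the handedness is a purely local invariant of the diagram at a small disk around $x$. Since the smoothings $\rho_0(-,y)$ and $\rho_1(-,y)$ alter $L$ only inside a small disk around $y$ (disjoint from the disk around $x$), the crossing $x$ persists as a crossing of $\rho_y L$ with the same handedness it had in $L$. Symmetrically, $y$ is a crossing of $\rho_x L$ of the same handedness.

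Next I would split into the four cases for the handedness of $(x,y)$ and in each case reduce the identity $\rho_x\rho_y L=\rho_y\rho_x L$ to an identity of the form $\rho_\varepsilon(\rho_\delta(L,y),x)=\rho_\delta(\rho_\varepsilon(L,x),y)$ for suitable $\varepsilon,\delta\in\{0,1\}$. This last identity is essentially tautological: on the state cube $\{0,1\}^{\mathcal{X}(L)}$, fixing the $x$-coordinate to $\varepsilon$ and the $y$-coordinate to $\delta$ gives the same subcube regardless of the order in which the coordinates are fixed, and the smoothed diagram associated to that state depends only on the state, not on the order in which the local replacements at $x$ and $y$ were performed. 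Equivalently, because the two replacement disks around $x$ and $y$ are disjoint, the two local replacement operations on diagrams commute.

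The only step that requires a little care is the bookkeeping for the handedness, namely checking that $x$ is still left-handed (respectively right-handed) in $\rho_y L$ exactly when it was left-handed (respectively right-handed) in $L$, so that the symbol $\rho_x$ applied to $\rho_y L$ refers to the same choice of smoothing as when applied to $L$; this is where the locality remark above is essential. Once this is in place, the commutativity $\rho_x\rho_y L=\rho_y\rho_x L$ follows immediately from the disjointness of the two modified disks, and no further computation is needed. I do not expect any serious obstacle; the content of the lemma is that the smoothing link construction is well-defined at the level of unordered subsets of $\mathcal{X}(L)$.
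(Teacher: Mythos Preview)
Your proposal is correct and matches the paper's approach: the paper does not give a detailed proof of this lemma at all, stating only ``By construction, we have the following result,'' and your locality argument is exactly the unpacking of that remark. The one point you flag as requiring care---that smoothing at $y$ does not change the handedness of $x$---is the only nontrivial observation, and you handle it correctly.
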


In view of Lemma \ref{lemma:smoothing}, for a subset $S$ of $\mathcal{X}(L)$, we obtain a link $\rho_{S}L$ by applying the smoothing link step by step to crossings in $S$. Obviously, $\mathcal{C}(\rho_{S}(L,x))$ is the subcomplex of $\mathcal{C}(L)$.

\subsection{Evolutionary Khovanov homology}

A \emph{weighted link} is a link $L$ equipped with a function $f:\mathcal{X}(L)\to \mathbb{R}$ on the set of crossings of $L$. We arrange the crossings in $\mathcal{X}(L)$ in ascending order of their assigned values, denoted as $x_{1}, x_{2},\dots,x_{n}$. Then we have a filtration of links
\begin{equation*}
  L,\rho_{x_{1}}L,\rho_{x_{2}}\rho_{x_{1}}L,\dots,\rho_{x_{n}}\cdots\rho_{x_{2}}\rho_{x_{1}}L.
\end{equation*}
Note that the link $\rho_{x_{n}}\cdots\rho_{x_{2}}\rho_{x_{1}}L$ is an unknot, comprising a collection of disjoint circles. The filtration of links characterizes the process by which a complex link is gradually untangled, crossing by crossing, through smoothing. This process can be understood as the evolution of a link from complexity to simplicity.

For any real number $a$, we have the subset $\mathcal{X}(L,a)$ of $\mathcal{X}(L)$ consists of crossings $x$ such that $f(x)\leq a$. Then we have a link $\rho_{\mathcal{X}(L,a)}L$, which is called the \emph{$a$-indexed link}.

Let $(\mathbb{R},\leq)$ the category with real numbers as objects and pairs of form $a\leq b$ as morphisms.
\begin{theorem}\label{proposition:functor}
The construction $\mathcal{C}(\rho_{\mathcal{X}(L,-)}L)$ is a functor from the category $(\mathbb{R},\leq)^{\mathrm{op}}$ to the category of cochain complexes.
\end{theorem}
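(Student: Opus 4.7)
The plan is to reduce the functoriality claim to the concrete content of Proposition \ref{proposition:link} (a single crossing smoothing produces a subcomplex of the ambient Khovanov complex) combined with Lemma \ref{lemma:smoothing} (smoothings at distinct crossings commute). Recall that a functor $F:(\mathbb{R},\leq)^{\mathrm{op}}\to \mathcal{D}$ requires (i) an object $F(a)$ for each $a\in\mathbb{R}$, (ii) a morphism $F(a)\to F(b)$ in $\mathcal{D}$ whenever $a\geq b$, and (iii) compatibility with identities and composition. I would manufacture these data for cochain complexes in that order.

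First I would fix the object assignment: to $a\in\mathbb{R}$ attach the cochain complex $\mathcal{C}(\rho_{\mathcal{X}(L,a)}L)$. This is well-defined because Lemma \ref{lemma:smoothing} guarantees that the iterated smoothing $\rho_{\mathcal{X}(L,a)}L$ is independent of the order in which the crossings of $\mathcal{X}(L,a)$ are processed, and hence so is the associated Khovanov complex. For the structural morphisms, given $a\geq b$ the inclusion $\mathcal{X}(L,b)\subseteq \mathcal{X}(L,a)$ lets me write $\rho_{\mathcal{X}(L,a)}L=\rho_{T}(\rho_{\mathcal{X}(L,b)}L)$ where $T=\mathcal{X}(L,a)\setminus\mathcal{X}(L,b)$, and iterating Proposition \ref{proposition:link} once for each $x\in T$ yields a chain of subcomplex inclusions whose composite
\begin{equation*}
\iota_{a,b}:\mathcal{C}(\rho_{\mathcal{X}(L,a)}L)\hookrightarrow \mathcal{C}(\rho_{\mathcal{X}(L,b)}L)
\end{equation*}
I would take as $F(a\geq b)$. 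Independence of the ordering of $T$ again follows from Lemma \ref{lemma:smoothing}, because the set-level injections $j_{0},j_{1}$ between state cubes commute and so do the induced degree-shifted inclusions on Khovanov complexes.

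Checking the two functor axioms then reduces to clean bookkeeping. The identity axiom corresponds to $T=\emptyset$, yielding $\iota_{a,a}=\mathrm{id}$. For composition, if $a\geq b\geq c$, both $\iota_{b,c}\circ \iota_{a,b}$ and $\iota_{a,c}$ arise from two orderings of the same smoothing set $\mathcal{X}(L,a)\setminus\mathcal{X}(L,c)$, and the commutativity of smoothings forces the two composites to agree.

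The main obstacle I expect is not this bookkeeping but rather confirming that each $\iota_{a,b}$ is genuinely a chain map, i.e.\ intertwines the Khovanov differentials on source and target rather than being merely a graded-vector-space inclusion. For a single smoothing this is the assertion $i_{0}d=di_{0}$ (or its right-handed analogue $i_{1}d=di_{1}$) alluded to after Proposition \ref{proposition:link}; concretely it amounts to checking, for every edge $\xi$ of the sub-state-cube of $\rho_{x}L$ viewed inside the state cube of $L$, that the edge maps $m$ or $\Delta$ from \eqref{equation:multiplication} and \eqref{equation:comultiplication}, the signs $\sgn(\xi)$, and the degree shifts $\{\pm 1\}$ prescribed by the handedness of $x$ all align between source and target. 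Once this single-crossing compatibility is secured, the multi-crossing case for arbitrary $T\subseteq \mathcal{X}(L)$ follows by induction on $|T|$, and the functor axioms above complete the proof.
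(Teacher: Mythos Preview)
Your proposal is correct and follows essentially the same route as the paper: both arguments use Proposition~\ref{proposition:link} and Lemma~\ref{lemma:smoothing} to exhibit $\mathcal{C}(\rho_{\mathcal{X}(L,b)}L)$ as a subcomplex of $\mathcal{C}(\rho_{\mathcal{X}(L,a)}L)$ whenever $a\leq b$, and then verify the identity and composition axioms directly from the resulting inclusions. Your extra care about order-independence of the iterated smoothing and about the chain-map condition $i_{0}d=di_{0}$ is welcome but already absorbed into the paper's statements of Lemma~\ref{lemma:smoothing} and the discussion preceding Proposition~\ref{proposition:link}, so no new ingredient is needed.
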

\begin{proof}
For any $a\leq b$, let $x_{t_{1}},\dots,x_{t_{u}}$ be the crossings in $\mathcal{X}(L,b)\backslash \mathcal{X}(L,a)$. By Proposition \ref{proposition:link} and Lemma \ref{lemma:smoothing}, the cochain complex $\mathcal{C}(\rho_{\mathcal{X}(L,b)}L)=\mathcal{C}(\rho_{t_{1}}\cdots \rho_{t_{u}}\rho_{\mathcal{X}(L,a)}L)$ is the subcomplex of $\mathcal{C}(\rho_{\mathcal{X}(L,a)}L)$. Let us denote $\theta_{a,b}:\mathcal{C}(\rho_{\mathcal{X}(L,b)}L)\to \mathcal{C}(\rho_{\mathcal{X}(L,a)}L)$. For real numbers $a\leq b\leq c$, we have the following commutative diagram.
\begin{equation*}
  \xymatrix{
  \mathcal{C}(\rho_{\mathcal{X}(L,c)}L)\ar@{->}[rr]^{\theta_{b,c}}\ar@{->}[rd]_{\theta_{a,c}}&&\mathcal{C}(\rho_{\mathcal{X}(L,b)}L)\ar@{->}[ld]^{\theta_{a,b}}\\
   &\mathcal{C}(\rho_{\mathcal{X}(L,a)}L)&
  }
\end{equation*}
It follows that $\theta_{a,b}\theta_{b,c}=\theta_{a,c}$. Note that $\theta_{a,a}=\mathrm{id}|_{\mathcal{C}(\rho_{\mathcal{X}(L,a)}L)}$ for any real number $a$. The desired result follows.
\end{proof}

For real numbers $a\leq b$, we have links $\rho_{\mathcal{X}(L,a)}L$ and $\rho_{\mathcal{X}(L,b)}L$. Note that there is an inclusion of Khovanov cochain complexes
\begin{equation*}
   \mathcal{C}(\rho_{\mathcal{X}(L,b)}L) \hookrightarrow \mathcal{C}(\rho_{\mathcal{X}(L,a)}L).
\end{equation*}
This induces the morphism of Khovanov homology
\begin{equation*}
   \lambda_{a,b}:H(\rho_{\mathcal{X}(L,b)}L) \to H(\rho_{\mathcal{X}(L,a)}L).
\end{equation*}
The \emph{$(a,b)$-evolutionary Khovanov homology} of the weighted link $(L,f)$ is defined by
\begin{equation*}
   H_{a,b}^{k}(L,f):=\im (H^{k}(\rho_{\mathcal{X}(L,b)}L) \to H^{k}(\rho_{\mathcal{X}(L,a)}L)),\quad k\geq 0.
\end{equation*}
\begin{remark}\label{remark:construction}
For a weighted link $(L,f)$ with crossings $x_{1}, x_{2},\dots,x_{n}$ of ascending weights, one can also obtain a filtration of links
\begin{equation*}
  L,\rho_{x_{n}}L,\rho_{x_{n-1}}\rho_{x_{n}}L,\dots,\rho_{x_{1}}\cdots\rho_{x_{n-1}}\rho_{x_{n}}L.
\end{equation*}
For any real number $a$, let $\mathcal{X}_{a}(L)$ be the set of crossing with weight $f(x)\geq a$. Then the construction $\mathcal{C}(\rho_{\mathcal{X}_{-}(L)}L)$ is a functor from the category $(\mathbb{R},\leq)$ to the category of cochain complexes. For real numbers $a\leq b$, we define the \emph{$(a,b)$-evolutionary Khovanov homology} of the weighted link $(L,f)$ as
\begin{equation*}
   H_{a,b}^{k}(L,f):=\im (H^{k}(\rho_{\mathcal{X}_{a}(L)}L) \to H^{k}(\rho_{\mathcal{X}_{b}(L)}L)),\quad k\geq 0.
\end{equation*}
This definition shares the same fundamental idea as the previous definition.
\end{remark}

The rank of $H_{a,b}^{k}(L,f)$ is called the \emph{$(a,b)$-evolutionary Betti number}, denoted by $\beta_{a,b}(L,f)$, which is the crucial features for us to conduct data analysis. In particular, if we take $a=b$, we have that $H_{a,b}^{k}(L,f)=H^{k}(\rho_{\mathcal{X}(L,a)}L)$. Furthermore, we can define the \emph{$(a,b)$-evolutionary unnormalized Jones polynomial} as
\begin{equation*}
  \hat{J}_{a,b}(L)=\sum\limits_{k}(-1)^{k}\qdim H^{k}_{a,b}(L).
\end{equation*}

As a direct corollary of Proposition \ref{proposition:functor}, we have the following result, which shows that the evolutionary Khovanov homology is a (co)persistence module \cite{bi2022cayley}.
\begin{theorem}
The evolutionary Khovanov homology $H:(\mathbb{R},\leq)^{\mathrm{op}}\to \mathrm{Vec}_{\mathbb{K}}$ is a functor from the category $(\mathbb{R},\leq)^{\mathrm{op}}$ to the category of $\mathbb{K}$-module.
\end{theorem}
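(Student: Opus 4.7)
The plan is to recognize the statement as essentially a composition-of-functors argument: Proposition \ref{proposition:functor} already delivers a contravariant functor from $(\mathbb{R},\leq)$ to the category $\mathbf{Ch}_{\mathbb{K}}$ of cochain complexes of $\mathbb{K}$-modules, and cohomology $H^{k}:\mathbf{Ch}_{\mathbb{K}}\to \mathrm{Vec}_{\mathbb{K}}$ is itself a (covariant) functor. Post-composing should give the desired functor from $(\mathbb{R},\leq)^{\mathrm{op}}$ to $\mathrm{Vec}_{\mathbb{K}}$.

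Concretely, I would first fix $k$ and define $H(a) := H^{k}(\rho_{\mathcal{X}(L,a)}L)$ for each object $a\in (\mathbb{R},\leq)^{\mathrm{op}}$. Given a morphism $a\leq b$, the inclusion $\theta_{a,b}:\mathcal{C}(\rho_{\mathcal{X}(L,b)}L)\hookrightarrow \mathcal{C}(\rho_{\mathcal{X}(L,a)}L)$ supplied by Proposition \ref{proposition:functor} is a cochain map, so passing to $k$-th cohomology yields a well-defined $\mathbb{K}$-linear map $\lambda_{a,b}=H^{k}(\theta_{a,b}):H(b)\to H(a)$, which is precisely the morphism appearing just before the theorem statement. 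The identity axiom $\lambda_{a,a}=\mathrm{id}_{H(a)}$ follows because $\theta_{a,a}=\mathrm{id}$ and $H^{k}$ preserves identities.

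For composition, given $a\leq b\leq c$, the commutative triangle established in the proof of Proposition \ref{proposition:functor} gives $\theta_{a,b}\circ \theta_{b,c}=\theta_{a,c}$ in $\mathbf{Ch}_{\mathbb{K}}$. Applying the functor $H^{k}$ and using its preservation of composition, we obtain
\begin{equation*}
    \lambda_{a,b}\circ \lambda_{b,c} = H^{k}(\theta_{a,b})\circ H^{k}(\theta_{b,c}) = H^{k}(\theta_{a,b}\circ \theta_{b,c}) = H^{k}(\theta_{a,c}) = \lambda_{a,c},
\end{equation*}
which is exactly the functoriality condition in $(\mathbb{R},\leq)^{\mathrm{op}}$ (source $c$, target $a$, with arrow reversal built into $\mathrm{op}$).

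I do not anticipate a genuine obstacle: the only mildly delicate point is the bookkeeping of the $\mathrm{op}$ in the domain category, namely that a morphism $a\to b$ in $(\mathbb{R},\leq)^{\mathrm{op}}$ corresponds to $b\leq a$ in $(\mathbb{R},\leq)$, and must be sent to a map $H(a)\to H(b)$ in $\mathrm{Vec}_{\mathbb{K}}$; this matches the direction of $\lambda_{a,b}:H(b)\to H(a)$ once the variances are read correctly. Since this is explicitly labelled a direct corollary of Proposition \ref{proposition:functor}, the proof can be kept to a couple of lines invoking that proposition and functoriality of $H^{k}$.
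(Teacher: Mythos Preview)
Your proposal is correct and matches the paper's approach: the paper simply declares the theorem a direct corollary of Proposition \ref{proposition:functor} without giving any further argument, and what you have written spells out precisely the composition-of-functors reasoning (post-composing the functor of Proposition \ref{proposition:functor} with the cohomology functor $H^{k}$) that underlies that remark.
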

Evolutionary Khovanov homology tracks how the generators of Khovanov homology evolve with changes in parameter filtration. This concept shares a remarkable similarity with persistent homology. Yet, there are fundamental distinctions between the evolution process of evolutionary Khovanov homology and the persistence process of persistent homology: the former relies on smoothing the link, while the latter is established through the Vietoris-Rips complex, ensuring a continuous persistence.

\begin{example}\label{example:unknot}
Consider the link $L$ in Figure \ref{figure:evolution_knot}. Link $L$ has four crossings, labeled $x_{1},x_{2},x_{3},x_{4}$ in the figure.
We consider the weighted functions $f,g:\mathcal{X}(L)\to \mathbb{R}$ defined by
\begin{equation*}
  f(x_{1})=1,f(x_{2})=2,f(x_{3})=3,f(x_{4})=5,
\end{equation*}
and
\begin{equation*}
  g(x_{1})=1,g(x_{2})=3,g(x_{3})=2,g(x_{4})=4.
\end{equation*}
This gives us the following filtrations of links:
\begin{equation*}
  L,\rho_{x_{1}}L,\rho_{x_{2}}\rho_{x_{1}}L,\rho_{x_{3}}\rho_{x_{2}}\rho_{x_{1}}L,\rho_{x_{4}}\rho_{x_{3}}\rho_{x_{2}}\rho_{x_{1}}L
\end{equation*}
and
\begin{equation*}
  L,\rho_{x_{1}}L,\rho_{x_{3}}\rho_{x_{1}}L,\rho_{x_{2}}\rho_{x_{3}}\rho_{x_{1}}L,\rho_{x_{4}}\rho_{x_{2}}\rho_{x_{3}}\rho_{x_{1}}L.
\end{equation*}
\begin{figure}[htb!]
	\centering
	\includegraphics[width=0.8\textwidth]{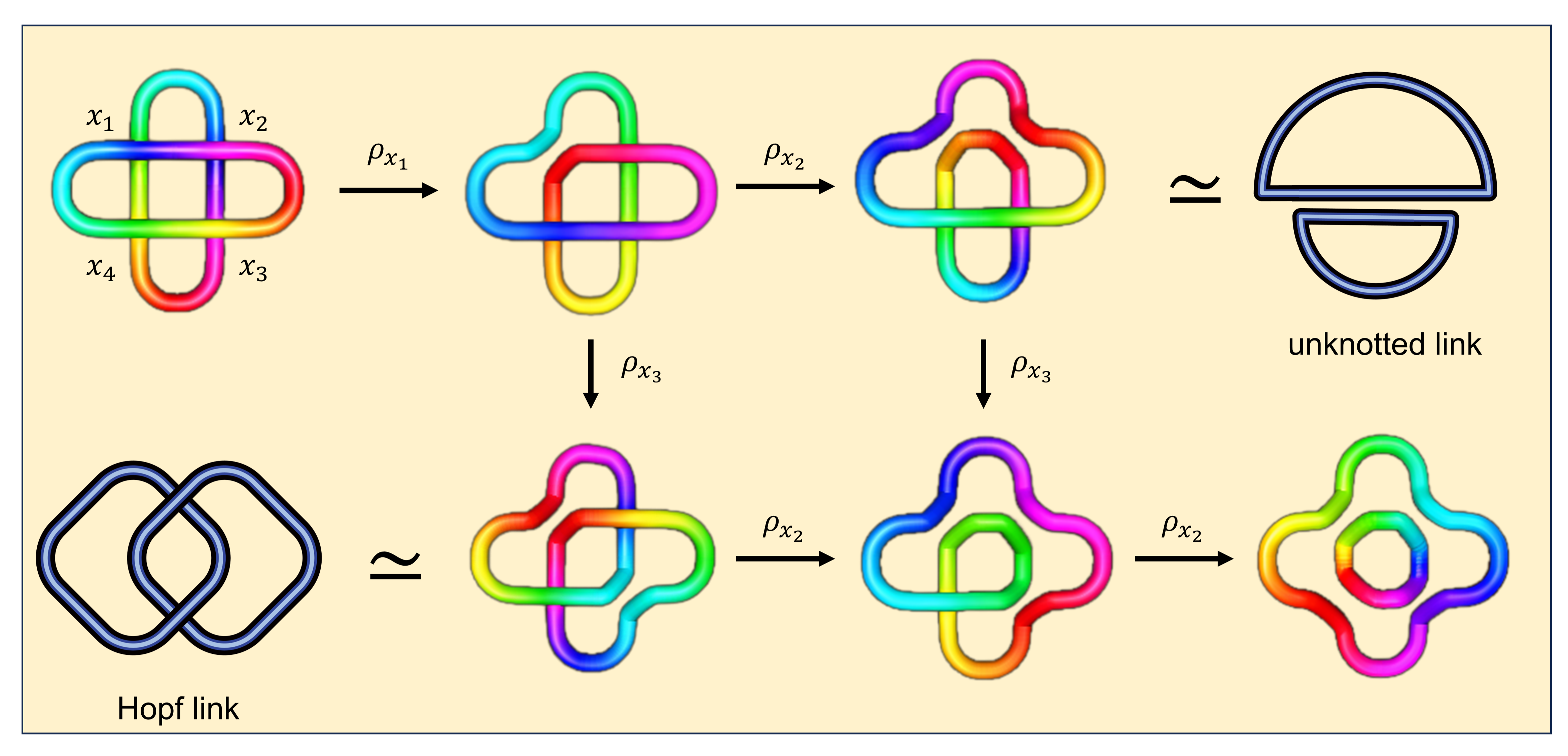}\\
	\caption{Link $L$ produces different filtrations of links when processed through the crossings $x_{1},x_{2},x_{3}$ and through the crossings $x_{1},x_{3},x_{2}$.}\label{figure:evolution_knot}
\end{figure}
Note that link $L$ is unknotted, so its Khovanov homology is trivial. The links in the filtration given by the weighted function $f$ are all unknotted links, hence their corresponding evolutionary Khovanov homologies are also trivial. On the other hand, note that the link $\rho_{x_{3}}\rho_{x_{1}}L$ is a Hopf link. Its Khovanov homology has four generators, and the Khovanov homology is given by
\begin{equation*}
  \begin{split}
    H^{-2}(\rho_{x_{3}}\rho_{x_{1}}L) &\cong \mathbb{K}\oplus \mathbb{K},\\
    H^{-1}(\rho_{x_{3}}\rho_{x_{1}}L) &=0,\\
    H^{0}(\rho_{x_{3}}\rho_{x_{1}}L) &\cong \mathbb{K}\oplus \mathbb{K}.
  \end{split}
\end{equation*}
The evolutionary Khovanov homology $H^{\ast}_{2,2}(L,g)$ is non-trivial.
This example illustrates that even if an unknotted link has trivial Khovanov homology, its evolutionary Khovanov homology may not be trivial. Moreover, different choices of weighting functions can produce different filtrations of links, leading to variations in their evolutionary Khovanov homology.
\end{example}

\subsection{Representations of evolutionary features}

In the previous section, we proved that evolutionary Khovanov homology is a functor. Consequently, evolutionary Khovanov homology also has representations similar to barcode and persistence diagram in persistent homology theory.

Given a weighted link $(L,f)$, since the links we consider have a finite number of crossings, we can arrange the crossings of the link $L$ in ascending order of their weights as $x_{1},x_{2},\dots,x_{n}$. For any integers $1\leq i\leq j\leq n$, we obtain an evolutionary Khovanov homology $H_{f(x_{i}),f(x_{j})}^{k}(L,f)$. Let $\mathbf{H}=\bigoplus\limits_{i}H_{f(x_{i})}(L,f)$, and let $t:\mathbf{H}\to \mathbf{H}$ be given by the map $\lambda_{f(x_{i}),f(x_{i+1})}:H_{f(x_{i+1})}(L,f)\to H_{f(x_{i})}(L,f)$. Then, for any element $g$ in the polynomial ring $\mathbb{K}[t]$, we obtain a map
\begin{equation*}
  g:\mathbf{H}\to \mathbf{H}.
\end{equation*}
This implies that $\mathbf{H}$ is a finitely generated $\mathbb{K}[t]$-module. By the decomposition theorem for finitely generated modules over a principal ideal domain, we have
\begin{theorem}
Let $(L,f)$ be a weighted link. We have a decomposition of the evolutionary Khovanov homolog of $(L,f)$ given by
\begin{equation}\label{equation:decomposition}
  \mathbf{H}\cong \bigoplus_{k}t^{b_{k}}\cdot \mathbb{K}[t] \oplus \left( \bigoplus\limits_{l}t^{c_{l}}\cdot \frac{\mathbb{K}[t]}{t^{d_{l}}\cdot \mathbb{K}[t]}\right).
\end{equation}
\end{theorem}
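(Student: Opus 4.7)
The plan is to interpret $\mathbf{H}$ as a finitely generated graded $\mathbb{K}[t]$-module and then invoke the classification theorem for such modules, along the lines of the Zomorodian--Carlsson treatment of persistence modules. Since the graded PID $\mathbb{K}[t]$ has only the homogeneous prime $(t)$, the specific form of the decomposition in (\ref{equation:decomposition}) will fall out automatically; this is what makes the proof essentially bookkeeping once the module structure is set up.

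First I would fix the graded module structure carefully. Because $L$ has finitely many crossings, $f$ takes only finitely many values $f(x_1) < f(x_2) < \cdots < f(x_n)$, and the filtered Khovanov homology is constant between consecutive values. I place $H_{f(x_i)}(L,f)$ in graded degree $i$ and give $\mathbb{K}[t]$ its standard grading with $\deg t = 1$; the action of $t$ on the degree-$i$ part is given by $\lambda_{f(x_{i-1}),f(x_i)}$, which is well-defined by Theorem \ref{proposition:functor}. Since $t$ raises the internal grading by one, $\mathbf{H}$ acquires the structure of a graded $\mathbb{K}[t]$-module. Finite generation is then immediate: each Khovanov chain complex of $\rho_{\mathcal{X}(L,f(x_i))}L$ is a finite direct sum of tensor powers of the two-dimensional space $V$, so every $H_{f(x_i)}(L,f)$ is finite-dimensional over $\mathbb{K}$; combined with $n$ finite, $\mathbf{H}$ is already finite-dimensional as a $\mathbb{K}$-vector space, and any $\mathbb{K}$-basis therefore generates it over $\mathbb{K}[t]$.

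With the module structure and finite generation in place, the classification of finitely generated graded modules over $\mathbb{K}[t]$ yields a decomposition into shifted free summands $t^{b_k}\mathbb{K}[t]$ and shifted cyclic torsion summands $t^{c_l}\mathbb{K}[t]/t^{d_l}\mathbb{K}[t]$, which is exactly (\ref{equation:decomposition}). The main subtle point---really the only step that is not purely bookkeeping---is ruling out torsion factors of the form $\mathbb{K}[t]/(p(t)^m)$ for irreducibles $p \neq t$. This holds because in the graded setting the only homogeneous prime of $\mathbb{K}[t]$ is $(t)$, so the elementary-divisor argument is forced to produce annihilators that are pure powers of $t$. Once this observation is in hand, the desired decomposition follows directly from the standard structure theorem.
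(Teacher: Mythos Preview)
Your approach is exactly what the paper does: it sets up $\mathbf{H}$ as a finitely generated $\mathbb{K}[t]$-module via the structure maps $\lambda$ and then simply invokes the structure theorem for finitely generated modules over a PID, with no further argument. In fact you are more careful than the paper, since you explicitly justify finite generation and explain (via the graded structure) why only powers of $t$ can occur as elementary divisors---points the paper leaves implicit; the only slip is that with your grading convention the map $\lambda_{f(x_{i-1}),f(x_i)}$ sends degree $i$ to degree $i-1$, so $t$ lowers rather than raises degree, but this is cosmetic and does not affect the argument.
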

In the decomposition mentioned above, the $\mathbb{K}[t]$-module $\mathbf{H}$ has two components: the free part and the torsion part. For the free part, $b_{k}$ represents a generator of the evolutionary Khovanov homology, which has weight 1 until smoothing at crossing $x_{b_{k}}$ and becomes weight 0 after smoothing at crossing $x_{b_{k}}$. For the torsion part, $c_{l}$ represents a generator that, after smoothing at crossing $x_{c_{l}}$, its weight becomes 0. Before smoothing at crossing $x_{c_{l}}$, this generator has weight 1 after smoothing at crossing $x_{c_{l}-d_{l}}$ and weight 0 before smoothing at crossing $x_{c_{l}-d_{l}}$.

Evolutionary Khovanov homology reflects the changes in homological generators of a link as it undergoes smoothing. This provides a more nuanced characterization of the topological features of the link. It also implies that the characteristic representation of evolutionary Khovanov homology is highly valuable in application. Common representations include barcode and persistence diagrams. Considering the decomposition of evolutionary Khovanov homology, each generator's information can be represented using intervals. For the decomposition \eqref{equation:decomposition}, the generators of the free part can be represented by intervals $(-\infty,b_{k}]$, while for the torsion part, their generators can be represented by intervals $[c_{l}-d_{l},c_{l}]$. This collection of intervals provides the barcode of evolutionary Khovanov homology. Another well-known representation is the persistence diagram. For the generators of the free part, they are represented by pairs of the form $(-\infty,b_{k})$, while for the torsion part, pairs of the form $(c_{l}-d_{l},c_{l})$ are used. These pairs correspond to points on the plane $\mathbb{R}^{2}$, and these discrete points provide the persistence diagram representation of evolutionary Khovanov homology. Other tools such as Betti curves and persistence landscapes are commonly used for representing and analyzing topological features. We will demonstrate these representations in examples and applications.

\begin{example}
Consider the weighted trefoil knot $(L,f)$ with $f:\mathcal{X}(L)\to \mathbb{R}$ defined as $f(x_{1})=1$, $f_{x_{2}}=2$, and $f_{x_{3}}=3$. Then we have a filtration of links $L,\rho_{x_{1}}L,\rho_{x_{2}}\rho_{x_{1}}L,\rho_{x_{3}}\rho_{x_{2}}\rho_{x_{1}}L,$ shown in Figure \ref{figure:trefoil_barcode}\textbf{a}. This filtration illustrates the process of untangling a crossing of a trefoil by smoothing.
\begin{figure}[htb!]
	\centering
	\includegraphics[width=0.9\textwidth]{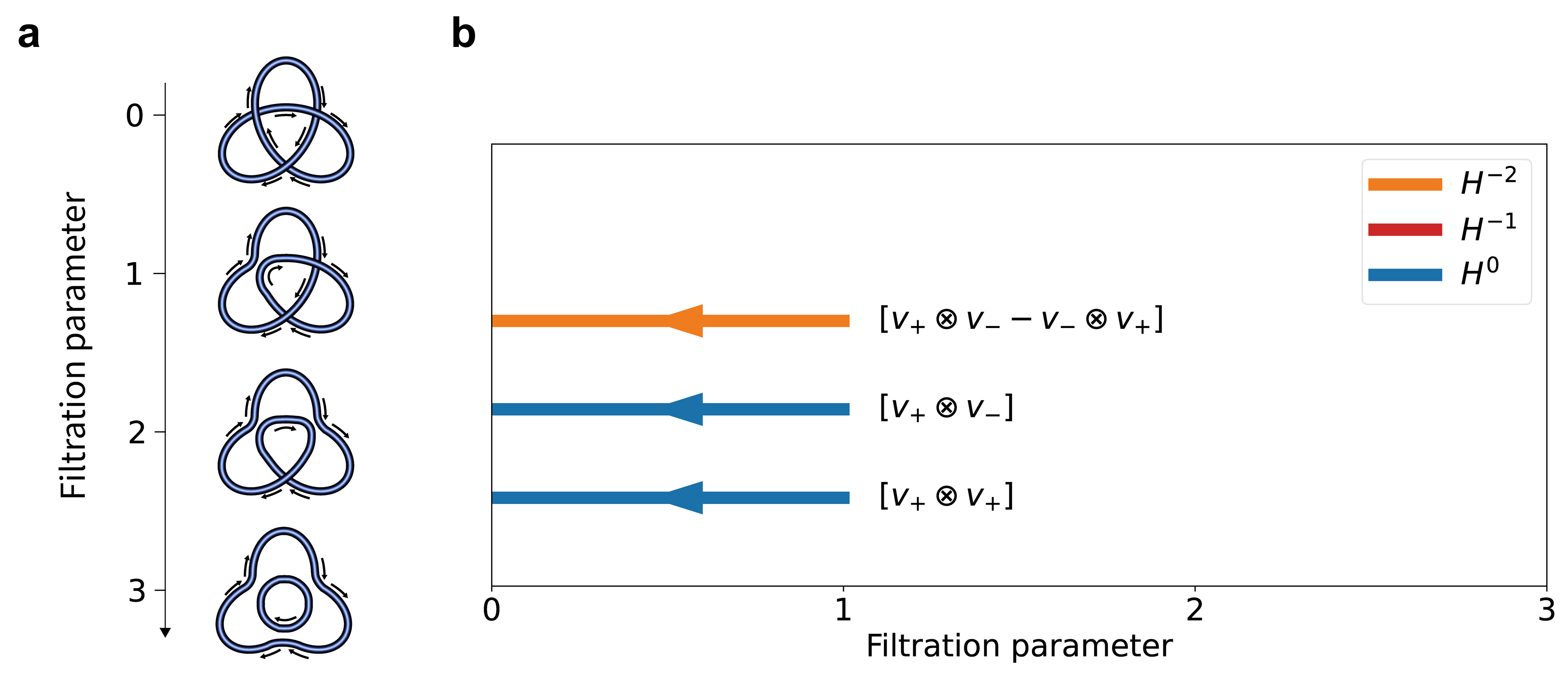}\\
	\caption{\textbf{a} The filtration of smoothing links of the weighted trefoil link $(L,f)$. \textbf{b} The barcode of the evolutionary Khovanov homology of $(L,f)$.}\label{figure:trefoil_barcode}
\end{figure}
Note that the last two links are both unknotted, so they have trivial Khovanov homology. Now, let's first examine the Khovanov complex of the link $\rho_{x_{1}}L$. Note that the map $i_{0}:2^{\mathcal{X}(\rho_{x_{2}}L)}\to 2^{\mathcal{X}(L)}$ is given by $(s_{1},s_{2})\to (1,s_{1},s_{2})$. Hence, we can verify the commutative diagram between the Khovanov complex of $\rho_{x_{1}}L$ and the Khovanov complex of $L$.
\begin{equation*}
  \xymatrix{
  &0\ar@{->}[r]\ar@{->}[d]&V\otimes V \ar@{->}[r]^{d^{-2}}\ar@{^{(}->}[d]& V\oplus V \ar@{->}[r]^{d^{-1}}\ar@{^{(}->}[d]&V\otimes V \ar@{->}[r]\ar@{^{(}->}[d]& 0\\
   0\ar@{->}[r]&V\otimes V \otimes V\ar@{->}[r]^{d^{-3}}&\bigoplus\limits_{i=1}^{3} V\otimes V \ar@{->}[r]^{d^{-2}}& V\oplus V \oplus V \ar@{->}[r]^{d^{-1}}&V\otimes V \ar@{->}[r]& 0
 }
\end{equation*}
We select the basis of $V\otimes V$ as $v_{+}\otimes v_{+}$, $v_{+}\otimes v_{+}$, $v_{+}\otimes v_{+}$, $v_{+}\otimes v_{+}$, and for $V\oplus V$, the basis is chosen as $(v_{+},0)$, $(v_{-},0)$, $(0,v_{+})$, $(0,v_{-})$. Then, the left representation matrices of the differentials $d^{-2}$ and $d^{-1}$ in the Khovanov complex $\mathcal{C}^{\ast}(\rho_{x_{1}}L)$ are as follows:
\begin{equation*}
  B_{-2}=\left(
           \begin{array}{cccc}
             1 & 0 & 1 & 0 \\
             0 & 1 & 0 & 1 \\
             0 & 1 & 0 & 1 \\
             0 & 0 & 0 & 0 \\
           \end{array}
         \right),\quad B_{-1}=\left(
                                \begin{array}{cccc}
                                  0 & 1 & 1 & 0 \\
                                  0 & 0 & 0 & 1 \\
                                  0 & -1 & -1 & 0 \\
                                  0 & 0 & 0 & -1 \\
                                \end{array}
                              \right).
\end{equation*}
From matrix calculations, we can obtain the generators of the Khovanov homology of $\rho_{x_{1}}L$ as in Table \ref{table:smoothing_homology}.
\begin{table}[htb!]
  \centering
  \caption{The Khovanov homology $H^{k,l}(\rho_{x_{1}}L)$ of $\rho_{x_{1}}L$.}\label{table:smoothing_homology}
  \begin{tabular}{c|c|c|c}
    \hline
    $H^{k,l}(\rho_{x_{1}}L)$ & $k=0$ & $k=-1$ & $k=-2$ \\
    $l=0$ & $[v_{+}\otimes v_{+}]$ & 0 & 0 \\
    $l=-1$ & 0 & 0 & 0 \\
    $l=-2$ & $[v_{+}\otimes v_{-}]$ & 0 & 0 \\
    $l=-3$ & 0 & 0 & 0 \\
    $l=-4$ & 0 & 0 & $[v_{+}\otimes v_{-}-v_{-}\otimes v_{+}]$ \\
    $l=-5$ & 0 & 0 & 0 \\
    $l=-6$ & 0 & 0 & $[v_{-}\otimes v_{-}]$ \\
    \hline
  \end{tabular}
\end{table}
Therefore, the Khovanov homology of $\rho_{x_{1}}L$ is given by
\begin{equation*}
  \begin{split}
    H^{-2}(\rho_{x_{1}}L) &\cong \mathbb{K}\oplus \mathbb{K},\\
    H^{-1}(\rho_{x_{1}}L) &=0,\\
    H^{0}(\rho_{x_{1}}L) &\cong \mathbb{K}\oplus \mathbb{K}.
  \end{split}
\end{equation*}
The corresponding unnormalized Jones polynomial is given by
\begin{equation*}
  \hat{J}(L)=\mathcal{X}_{q}(L)=\sum\limits_{k}(-1)^{k}\qdim H^{k}(L)=1+q^{-2}+q^{-4}+q^{-6}.
\end{equation*}
Comparing Table \ref{table:trefoil_homology} and Table \ref{table:smoothing_homology}, we observe that the homology generators $[v_{+}\otimes v_{+}]$, $[v_{+}\otimes v_{-}]$, and $[v_{+}\otimes v_{-}-v_{-}\otimes v_{+}]$ of $H^{\ast}(\rho_{x_{1}}L)$ are mapped to generators in $H^{\ast}(L)$.
And the generator $[v_{-}\otimes v_{-}]$ maps to the torsion part in $H^{\ast}(L)$. Assuming that $2$ is invertible in $\mathbb{K}$, we can conclude that the generator $[v_{-}\otimes v_{-}]$ vanishes in $H^{\ast}(L)$. The corresponding barcode of the evolutionary Khovanov homology is shown in Figure \ref{figure:trefoil_barcode}\textbf{b}. There are three bars, representing the generators $[v_{+}\otimes v_{+}]$, $[v_{+}\otimes v_{-}]$, and $[v_{+}\otimes v_{-}-v_{-}\otimes v_{+}]$. The arrows indicate that the cohomology generators emerge from later moments and persist towards earlier moments.
These generators can be represented by intervals as $[0,1]$, $[0,1]$, and $[0,1]$, respectively, each with degrees $-1$, $-3$, and $-5$. Besides, the $(0,1)$-evolutionary unnormalized Jones polynomial of $(L,f)$ is
\begin{equation*}
  \hat{J}_{0,1}(L)=\sum\limits_{k}(-1)^{k}\qdim H^{k}_{0,1}(L)=q^{-1}+q^{-3}+q^{-5}.
\end{equation*}
\end{example}

\subsection{Distance-based filtration of links}

Traditional approaches to studying knots or links primarily focus on their topological properties. However, considering knots and links as objects within a metric space, their geometric properties are equally significant. In this section, we study the geometric information and topological characteristics of links by exploring distance-based filtration. This method allows us to extract richer and more effective information about links.

Consider a link $L$ with crossings projected into a space $\mathbb{R}^{2}$. Let $\mathcal{X}(L)$ be the set of crossings. We have a function $f:\mathcal{X}(L)\to \mathbb{R}$ defined as follows. For a crossing $x\in \mathbb{R}^{2}$, we can construct a disk $D(x, r)$ with center $x$ and radius $r$. Then, $f(x)$ is defined as the maximal real number $r$ such that there are no other crossings within the interior of $D(x, r)$ apart from $x$. Mathematically, we have
\begin{equation}\label{equation:weight}
  f(x)=\max \{r| d(x,y)\geq r \text{ for any crossing $y\neq x$ in $\mathcal{X}(L)$}\}.
\end{equation}
Geometrically, we connect points that are within a distance $r$. When $r<f(x)$, the point $x$ remains isolated. Based on this construction, we obtain a weighted link $(L,f)$. Using the method described in \ref{remark:construction}, we can obtain a filtration of links, which we refer to as the \emph{distance-based filtration of links}. In the above construction, we can metaphorically say that we smooth out the isolated crossings first, gradually breaking down the entire knot step by step.

Now, For real numbers $a\leq b$, the $(a,b)$-evolutionary Khovanov homology the link $L$ is
\begin{equation*}
   H_{a,b}^{k}(L):=\im (H^{k}(\rho_{\mathcal{X}_{a}(L)}L) \to H^{k}(\rho_{\mathcal{X}_{b}(L)}L)),\quad k\geq 0.
\end{equation*}
Specifically, when $a$ and $b$ are sufficiently large, $H_{a,b}^{k}(L)=H^{k}(L)$. Conversely, when $a$ and $b$ are sufficiently small, we have $H_{a,b}^{k}(L)=0$. We will illustrate this method with an example.

\begin{example}
Consider the link $L$ embedded in $\mathbb{R}^{3}$ shown in Figure \ref{geometric_link}\textbf{a}. This is a knot of $7_{6}$ type.
\begin{figure}[htb!]
  \centering
  \includegraphics[width=0.5\textwidth]{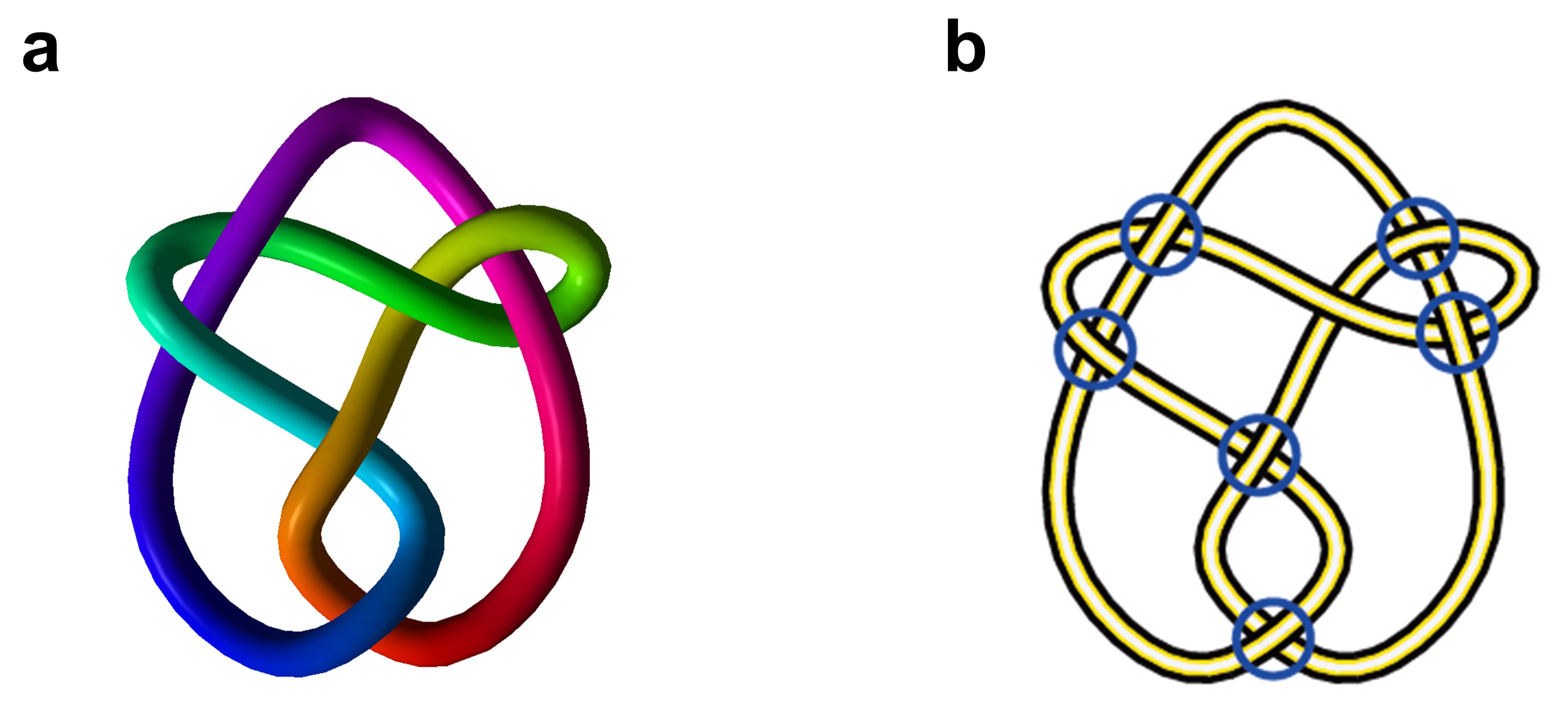}\\
  \caption{\textbf{a} A knot $L$ of type $7_{6}$ in 3-dimensional space. \textbf{b} The corresponding knot diagram of $L$.}\label{geometric_link}
\end{figure}
The coordinates of these crossings are given below:
\begin{equation*}
\begin{split}
& (-3.68122, 2.1618, 0.520849),(-2.31313, 4.52637, -0.526226), \\
& (-0.291898, -0.0329635, 0.5289),(-0.000160251, -3.82999, -0.657526), \\
& (1.29451, 3.02755, -0.309725), (2.99467, 4.45183, 0.450002), \\
& (3.79753, 2.50471, -0.482759).
\end{split}
\end{equation*}
We project the knot onto the $xOy$ plane, obtaining a knot diagram as shown in Figure \ref{geometric_link}\textbf{b}.
\begin{figure}[htb!]
  \centering
  \includegraphics[width=0.75\textwidth]{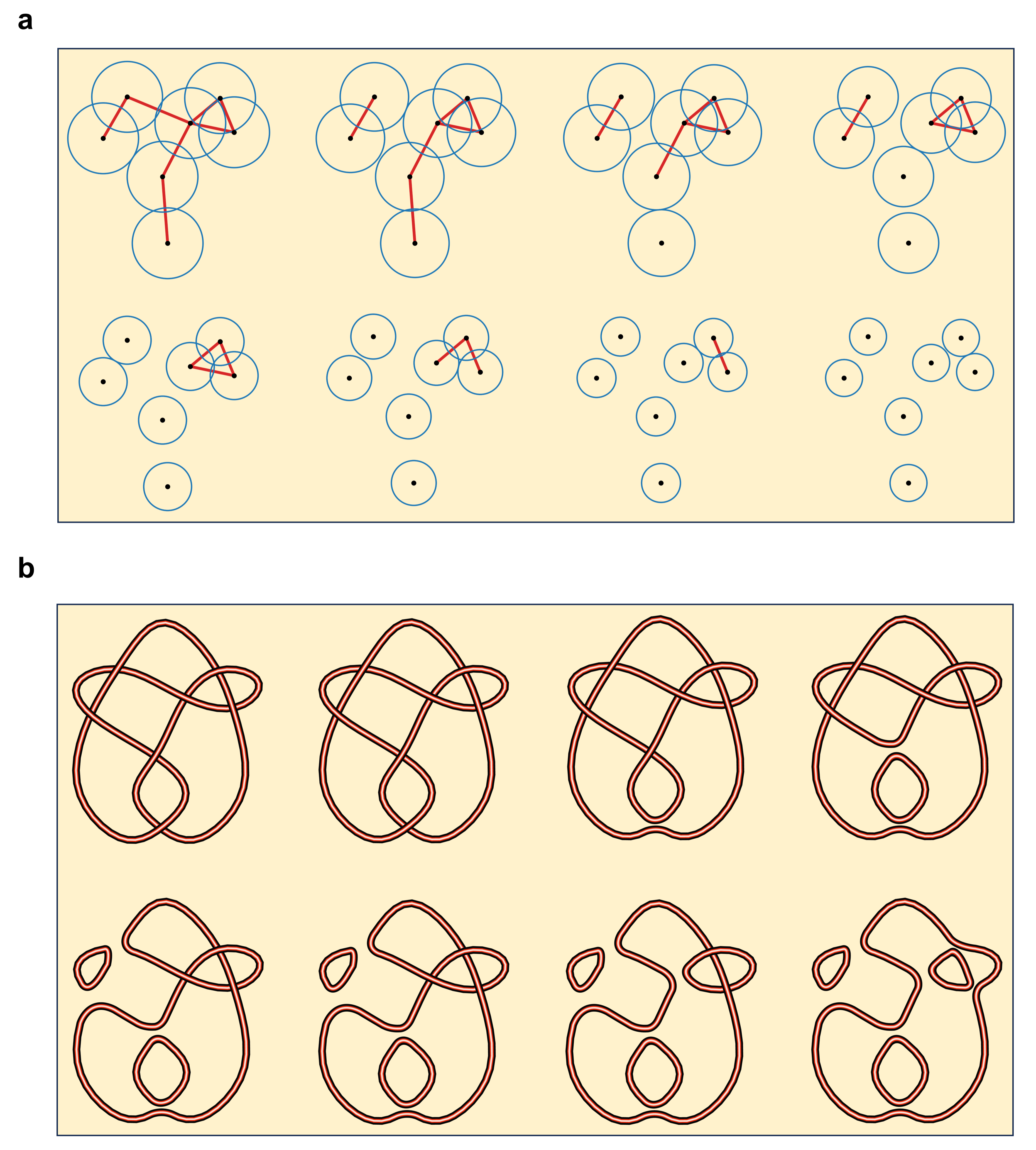}\\
  \caption{\textbf{a} As the distance decreases, isolated crossing points undergo gradual smoothing. \textbf{b} The filtration of links provided by the distance-based weighted function.}\label{figure:distance_filtration}
\end{figure}
Through the construction of the weighted function in Equation \eqref{equation:weight}, we can obtain a weighted link $(L,f)$. Figure \ref{figure:distance_filtration}\textbf{b} depicts the process of assigning weights to crossings. Subsequently, we can derive a filtration of links as illustrated in Figure \ref{figure:distance_filtration}\textbf{b}.
The variations in Figure \ref{figure:distance_filtration}\textbf{a} correspond to eight different cases, each yielding a distinct result. In Table \ref{table:distance_filtration}, we describe the different critical distances corresponding to the changes in Figure \ref{figure:distance_filtration}\textbf{a}, along with their respective link types. Here, $7_{6}$ and $3_{1}$ represent types in the knot table. Specifically, $3_{1}$ denotes the trefoil. The links $5^{2}_{1}$ and $2^{2}_{1}$ are representations in Rolfsen's Table of Links, where $5^{2}_{1}$ is the Whitehead link and $2^{2}_{1}$ is the Hopf link. Additionally, $n\bigcirc$ denotes $n$ separate unknots $\bigcirc$.

\begin{table}[htb!]
  \centering
  \caption{The link types of the filtration of links}\label{table:distance_filtration}
  \begin{tabular}{c|c|c|c|c|c|c|c|c}
    \hline
        Filtration & 1 & 2 & 3 & 4 & 5 & 6 & 7 & 8 \\
        \hline
    Critical distance & 2.019 & 1.953 & 1.904 & 1.724 & 1.366 & 1.279 & 1.109 & 1.053 \\
    Type of links & $7_{6}$  & $5^{2}_{1}$ & $5^{2}_{1}$+$\bigcirc$  &  $5^{2}_{1}$+ $\bigcirc$ & $3_{1}$+2$\bigcirc$ & $3_{1}$+2$\bigcirc$ &  $2^{2}_{1}$+2$\bigcirc$ & 4 $\bigcirc$ \\
    \hline
  \end{tabular}
\end{table}
Furthermore, for each filtration distance, we can obtain the corresponding Khovanov homology. Figure \ref{figure:poincare_figures} illustrates the evolution of the graded Poincar\'{e} polynomial of Khovanov homology. The $x$-axis represents the filtration distance, while the $y$-axis denotes the Euler characteristic $\chi_{1}=\chi_{1}(L_{r})$ for the link $L_{r}$ at distance $r$. Each subfigure in Figure \ref{figure:poincare_figures} represents the surface of the graded Poincar\'{e} polynomial of the Khovanov homology $H^{\ast}(L_{r})$.
\begin{figure}[htb!]
  \centering
  \includegraphics[width=0.75\textwidth]{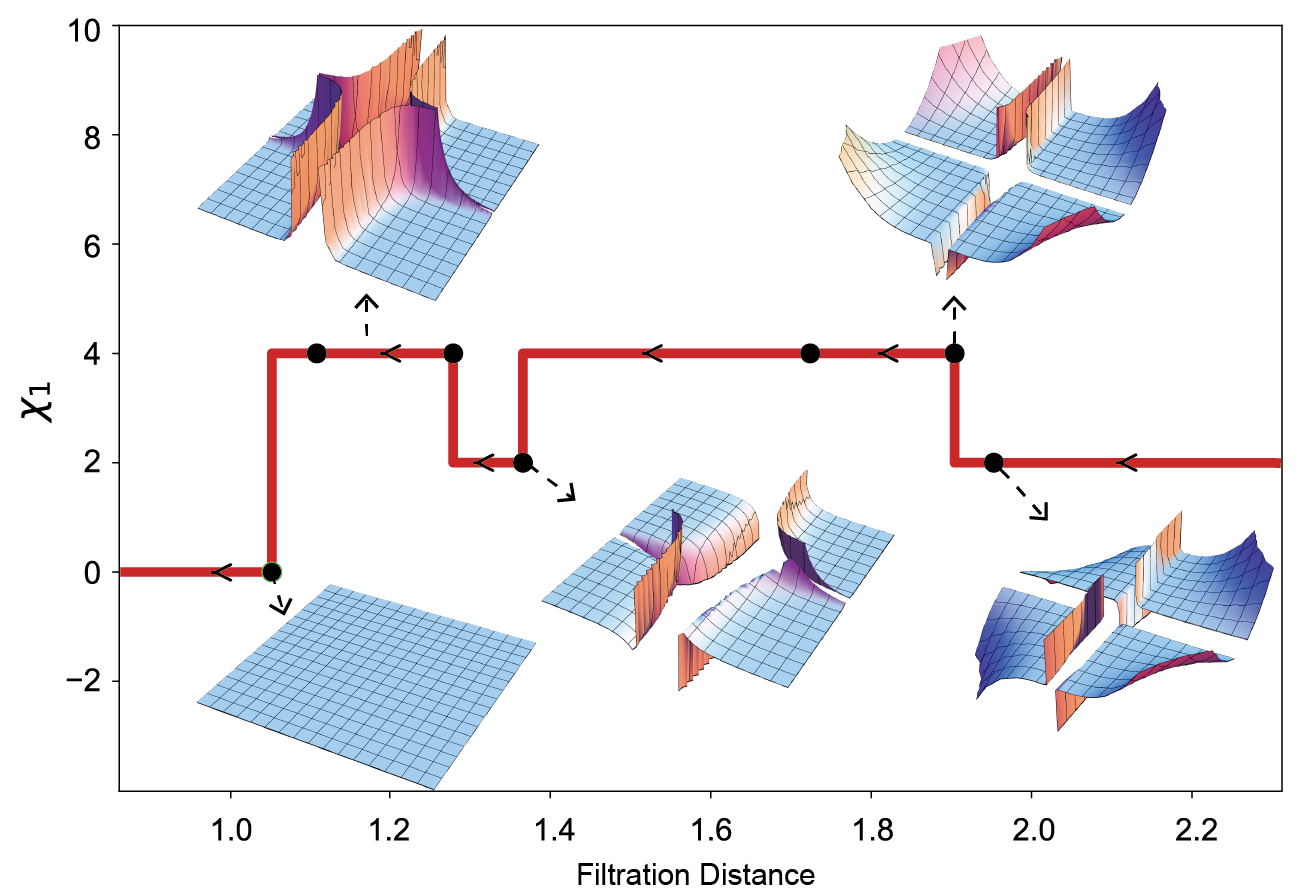}\\
  \caption{The representation of evolutionary Khovanov homology. Each subfigure represents the surface of the graded Poincar\'{e} polynomial of the Khovanov homology at the corresponding distance parameter. The $y$-axis denotes the value of Euler characteristic $\chi_{q}$ for the case $q=1$.}\label{figure:poincare_figures}
\end{figure}
The graded dimensions of the Khovanov homology of the links are the graded Betti numbers parameterized by $q$. When we set $q=1$, it reduces to the usual Betti numbers, representing the number of generators. In persistent homology theory, for a given dimension $k$ and distance $r$, the Betti number $\beta_{k}$ is a real number. In evolutionary Khovanov homology, for a given dimension $k$ and distance $r$, the graded Betti number $\beta_{k}(q)$ is a polynomial in $q$. In other words, the graded Betti number not only includes information about the number of generators but also about the degree of each generator. In Table \ref{table:gradedBetti}, we can find the observe the evolution of the graded Betti numbers in evolutionary Khovanov homology for different $k$.

\begin{table}[htb!]
  \centering
    \caption{The graded Betti of the filtration of links}\label{table:gradedBetti}
\begin{tabular}{|c|c|c|c|c|c|}
    \hline
    & \multicolumn{5}{|c|}{Distance} \\
    \cline{2-6}
    Degree & 1.053 & 1.109 & 1.279--1.366 & 1.724--1.953 & 2.019 \\
    \hline
    $k\geq 1$ & 0 & 0 & 0 & $q^{4}+1$ & $q^{3}+q+q^{-1}$ \\
    $k=0$ & 0 & $1+q^{-2}$ & $q^{-1}+q^{-3}$ & $2+2q^{-2}$ & $2q^{-1}+2q^{-3}$ \\
    $k=-1$ & 0 & 0 & 0 & $q^{-2}$ & $2q^{-3}+q^{-5}$ \\
    $k=-2$ & 0 & $q^{-4}+q^{-6}$ & $q^{-5}$ & $q^{-4}+q^{-6}$ & $2q^{-5}+2q^{-7}$ \\
    $k\leq -3$ & 0 & 0 & $q^{-9}$ & $q^{-8}$ & $q^{-7}+3q^{-9}+q^{-11}+q^{-3}$ \\
    \hline
\end{tabular}
\end{table}

\end{example}

\section{Applications}\label{section:application}

Pseudoknots in biology are a widely occurring motif in nucleic acids that play vital roles in various biological processes \cite{pleij1990pseudoknots,staple2005pseudoknots}. Comprising at least two intertwined stem-loop structures, pseudoknots are characterized by the intercalation of one stem's half into the halves of another stem, creating a knot-like three-dimensional shape. Despite their complex appearance, these structures can essentially be simplified to an unknot through deintercalation. Consequently, traditional knot theory-based data analysis methods may not be applicable in these scenarios.

Pseudoknots may be mathematically trivial, but they are biologically significant. Pseudoknots are involved in several critical cellular functions, particularly those involving RNAs, such as ribosomal frameshifting \cite{jones2022crystal} and regulation of gene expression \cite{pleij1990pseudoknots}. Pseudoknots also contribute to the catalytic activity of ribozymes, where they help stabilize the active site and facilitate chemical reactions. Additionally, they are essential in viral replication processes and impact RNA stability and folding, which affects RNA's overall functionality and interactions within the cell. This complex interplay of structure and function exemplifies the importance of pseudoknots in molecular biology, offering potential targets for therapeutic interventions and advancing our understanding of genetic regulation and viral mechanisms \cite{kim2008solution}. Therefore, a successful data analysis method designed to study a RNA pseudoknot structure must be able to accurately detect and analysis the structural differences of a regular RNA structure and keenly capture the pseudoknot's stem intercalation.

\begin{figure}[htb!]
	\centering
	\includegraphics[width=0.75\textwidth]{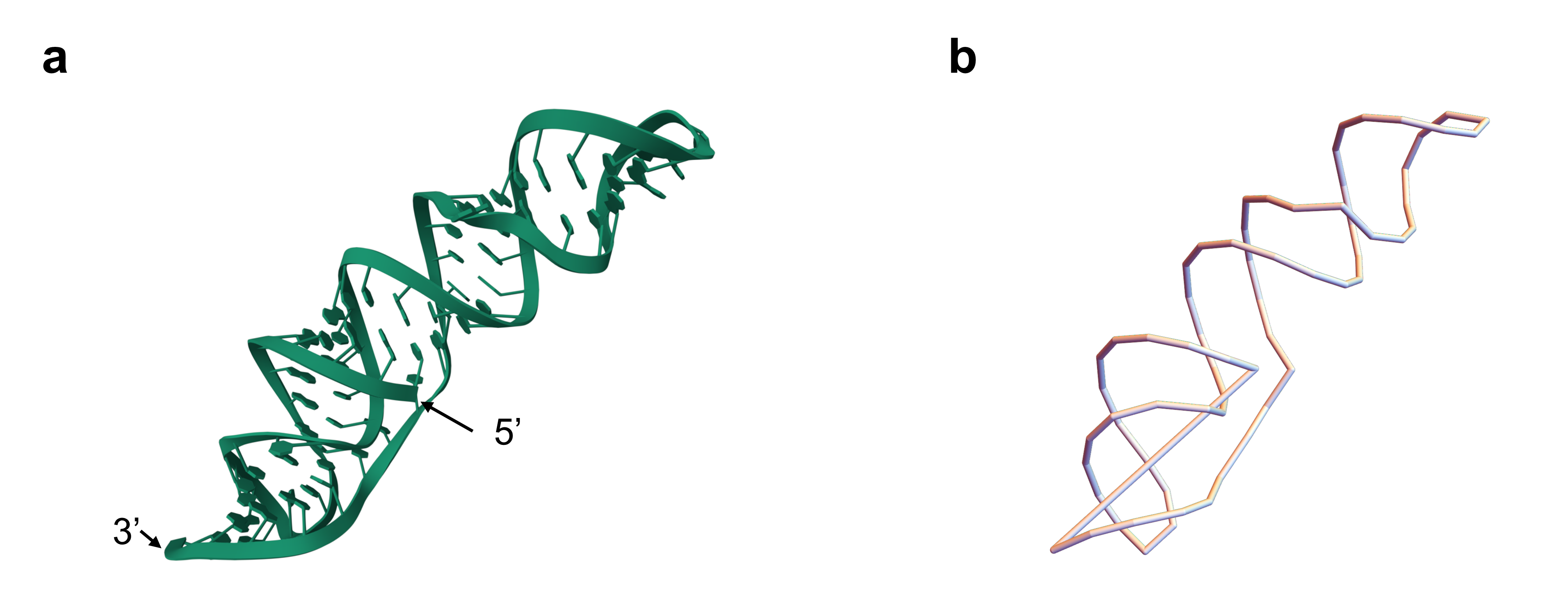}\\
	\caption{\textbf{a} The representation of the SARS-CoV-2 frameshifting pseudoknot with the 5' and 3' ends. \textbf{b} The corresponding abstract knot of the SARS-CoV-2 frameshifting pseudoknot formed by connecting the two ends.}\label{figure:knots_protein}
\end{figure}

\begin{figure}[htb!]
	\centering
	\includegraphics[width=0.9\textwidth]{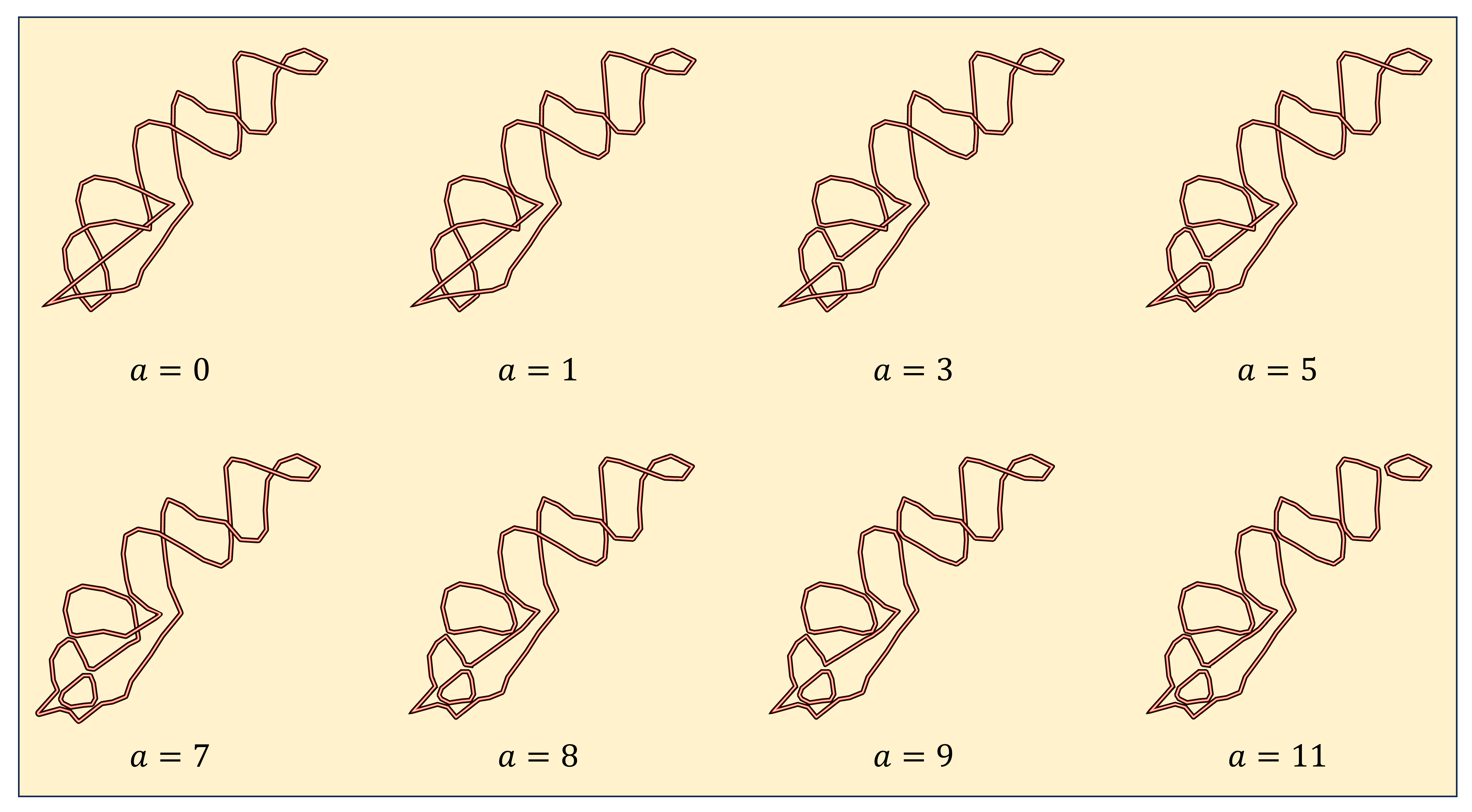}\\
	\caption{The filtration of smoothing links of the corresponding knot diagram of the SARS-CoV-2 frameshifting pseudoknot.}\label{figure:knots_protein2}
\end{figure}

In this study, we employed evolutionary Khovanov homology to investigate the structure of the SARS-CoV-2 frameshifting pseudoknot (PDB ID: 7LYJ), focusing on its RNA backbone to elucidate potential functional implications. Initially, we simplified the molecular structure by representing each RNA residue solely by its phosphorus atom, and connecting these atoms with linear segments to form a continuous backbone, directed from the 5' to 3' end, see Figure \ref{figure:knots_protein}\textbf{a}. This abstraction was followed by transforming the linear RNA chain into a closed loop, ensuring continuity by connecting the terminal phosphorus atoms. Such closure is essential for applying knot theory, as it converts the molecular structure into a topologically relevant form as in Figure \ref{figure:knots_protein}\textbf{b}. Lastly, to facilitate the analysis of the RNA's topological properties, we projected the closed-loop structure onto the xz plane, generating a knot diagram. Along the numbering of crossings, the value of the weight function corresponds to the number assigned to each crossing. Consequently, we obtain a filtration of links, as shown in Figure \ref{figure:knots_protein2}.

Using the method described in Section \ref{section:evolutionary}, we computed the evolutionary Khovanov homology of the corresponding knot diagram of the SARS-CoV-2 frameshifting pseudoknot. We obtained the corresponding barcode information, as shown in Figure \ref{figure:knots_barcode}. Note that the knot in Figure \ref{figure:knots_protein}\textbf{b} is unknotted, and its Khovanov homology is trivial. However, Figure \ref{figure:knots_barcode} shows that its evolutionary Khovanov homology is non-trivial, with four bars. Here, since the dimensions of the generators remain unchanged during the evolution, but their degrees change, we use the vertical axis to represent the degree. We use polyline segments to indicate the changes in the degrees of these generators.
\begin{figure}[htb!]
	\centering
	\includegraphics[width=0.75\textwidth]{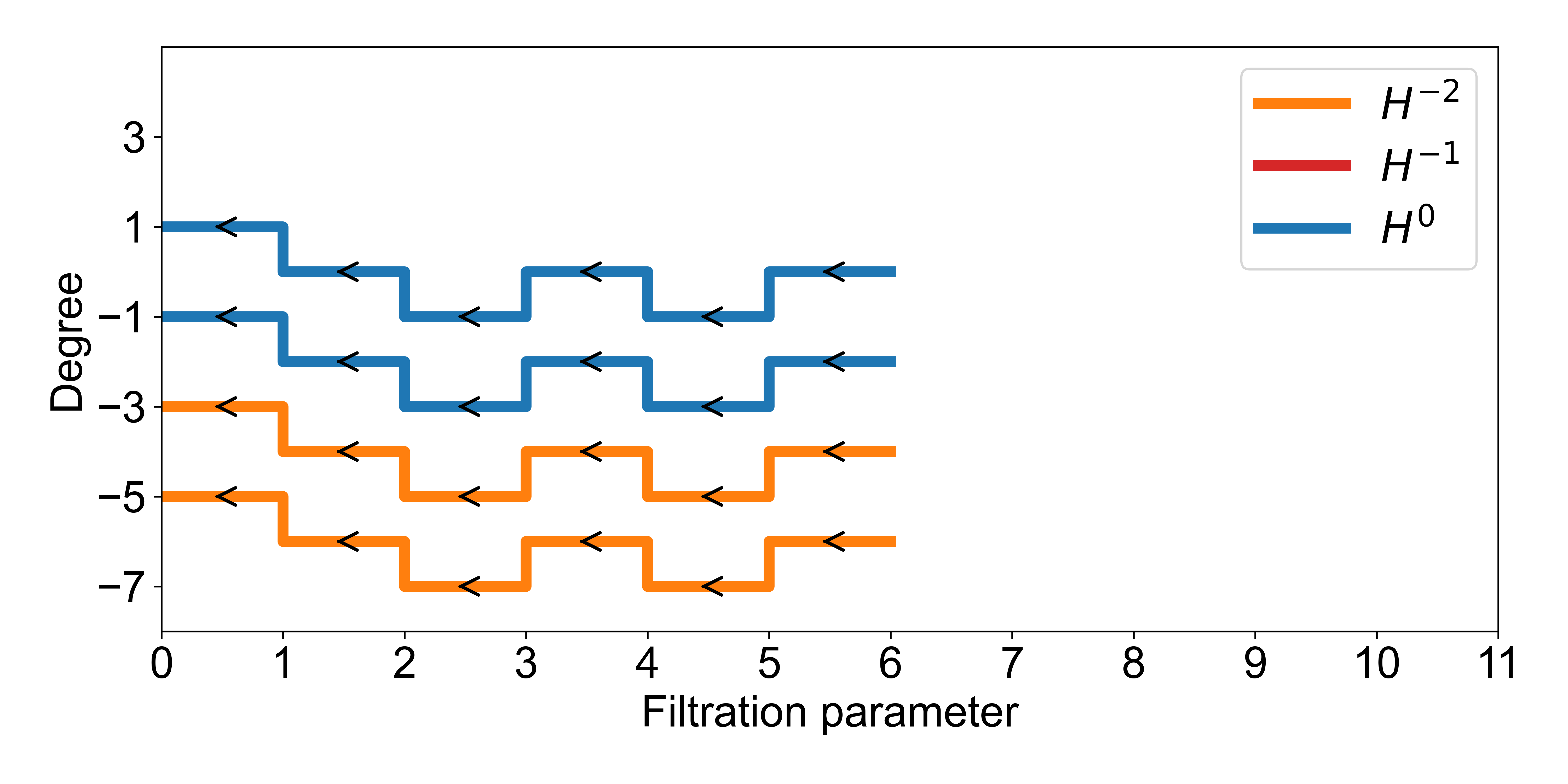}\\
	\caption{The barcode of the evolutionary Khovanov homology of the corresponding knot diagram of the SARS-CoV-2 frameshifting pseudoknot.}\label{figure:knots_barcode}
\end{figure}

This work serves as a crucial tool for understanding the intricate entanglements of the pseudoknot, offering insights into how topological variations might influence RNA functionality. This methodology not only underscores the significance of structural topology in RNA biology but also provides a framework for exploring the dynamic properties of other complex RNA molecules.

\section{Conclusion}


In this paper, we introduce evolutionary Khovanov homology in Euclidean space to study the topological invariants of real-world links at various geometric scales.
By performing systematical smoothing transformations of crossings, we transform the original link into a family of links, thereby obtaining richer geometry-informed topological  information at various filtration scales. As a result, even unknotted links may have non-trivial evolutionary Khovanov homology. Therefore, evolutionary Khovanov homology not only  characterizes the global topological structure of knots or links under knot equivalence but also captures their geometric shape. Furthermore, we  employ barcodes or persistence diagrams to depict the topological features of evolutionary Khovanov homology. It is noteworthy that the generators of evolutionary Khovanov homology not only provide persistent information but also possess degree information, which holds potential value in aiding the study of knots and links in science and engineering. Furthermore, we apply evolutionary Khovanov homology to investigate the topology and geometry of a SARS-CoV-2 frameshifting pseudoknot. The corresponding knot diagram of the abstract knot of the  pseudoknot is unknotted, resulting in trivial Khovanov homology. However, the barcode of the evolutionary Khovanov homology shows four distinct bars, which include information about changes in degree. This demonstrates that evolutionary Khovanov homology can provide a novel characterization of curved data in practical applications.

Evolutionary Khovanov homology introduces   multiscale analysis into Khovanov homology by considering the metric. It opens a new direction in geometric topology and will stimulates further developments in the field.
Additionally, this work represents an early attempt to apply advanced knot theory and geometric topology  to the quantitative analysis of curved data.
For example, the proposed approach can be extended to persistent Laplacian \cite{chen2021evolutionary,wang2020persistent,wei2023persistent} and  interaction \cite{liu2024persistent} types of formulations.  We hope that this study will open a new area in data science and machine learning. Finally, we envision both knot feature-based deep neural networks and knot theory-enabled large language models, facilitated by computational algorithm developments.

\section{Acknowledgments}
This work was supported in part by NIH grants R01GM126189, R01AI164266, and R35GM148196, National Science Foundation grants DMS2052983, DMS-1761320, and IIS-1900473, NASA  grant 80NSSC21M0023,   Michigan State University Research Foundation, and  Bristol-Myers Squibb  65109.

\bibliographystyle{plain}  
\bibliography{Reference}

\end{document}